%\documentclass[11pt,a4paper]{article}
%\usepackage{amsmath, amscd, amssymb, amsfonts,amsthm,enumerate}
%\usepackage{hyperref}
%\usepackage{pdfsync}
%\synctex=1
%\usepackage{pdfsync}
%\usepackage[utf8]{inputenc}
%\usepackage{epsfig}
%\usepackage{pb-diagram}
%\usepackage{geometry}
%\usepackage[english]{babel}
%\usepackage{txfonts}

\documentclass{amsart}
\usepackage[T1]{fontenc}
\usepackage[utf8]{inputenc}

\usepackage{amssymb,color}
\usepackage{amsfonts}
\usepackage{amsmath}
\usepackage{euscript}
\usepackage{enumerate}
\usepackage{graphics}
\usepackage{graphicx}
\usepackage[all,cmtip]{xy}
\usepackage{tikz}
\usetikzlibrary{arrows}

\usepackage{hyperref}
\usepackage{pdfsync}
\synctex=1

\newtheorem{theorem}{Theorem}[section]
\newtheorem{proposition}[theorem]{Proposition}
\newtheorem{corollary}[theorem]{Corollary}
\newtheorem{lemma}[theorem]{Lemma}

\theoremstyle{definition}

\newtheorem{remark}[theorem]{Remark}

\newtheorem{conj}[theorem]{Conjecture}
\numberwithin{equation}{section}

\begin{document}

\newcommand{\R}{\mathds{R}}
\newcommand{\Q}{\mathds{Q}}
\newcommand{\cok}{Coker}
\newcommand{\Rt}{\mbox{\newmatha R}}
\newcommand{\Rs}{\mbox{\newmathb R}}
\newcommand{\N}{\mathbb{N}}
\newcommand{\Z}{\mathbb{Z}}
\newcommand{\C}{\mathbb{C}}
\newcommand{\E}{\mathcal{E}}
\newcommand{\G}{\mathcal{G}}
\newcommand{\g}{\mathfrak{g}}
\newcommand{\s}{\mathfrak{sl}}
\newcommand{\A}{\mathcal{A}}
\newcommand{\F}{\mathbb{F}}
\renewcommand{\sl}{\mathfrak{sl}}
\newcommand{\gl}{\mathfrak{gl}}
\newcommand{\cad}{\ar@{}[dr]|{\circlearrowleft}}
\newcommand{\caad}{\ar@{}[ddr]|{\circlearrowleft}}
\newcommand{\cadd}{\ar@{}[drr]|{\circlearrowleft}}
\newcommand{\cai}{\ar@{}[dl]|{\circlearrowleft}}
\newcommand{\verteq}{\rotatebox{90}{$\,=$}}

%\newtheorem{defi}{Definition}[section]
%\newtheorem{lem}[defi]{Lemma}
%\newtheorem{teo}[defi]{Theorem}
%\newtheorem{pro}[defi]{Proposition}
%\newtheorem{cor}[defi]{Corollary}
%\theoremstyle{definition}
%\newtheorem{nota}[defi]{Note}
%\newtheorem{ejemplo}[defi]{Examples}
%\newtheorem{teor}{Teorema}

%%%%%%%%%%%%%%%%%%%%%%%%%%%%%
%%%%%%%%%%%%%%%%%%%%%%%%%%%5
%%%%%%%%%%%%%%%%%%%%%%%%%%%%%%%5
%%%%%%%%%%%%%%%%%%%%%%%%%%%%

\title[Tensor products and intertwining operators]{Tensor products and intertwining operators for
 uniserial representations of the Lie algebra
$\sl(2)\ltimes V(m)$}

\author{Leandro Cagliero}
\address{FaMAF-CIEM (CONICET), Universidad Nacional de C\'ordoba,
Medina Allende s/n, Ciudad Universitaria, 5000 C\'ordoba, Rep\'ublica
Argentina.}
\email{cagliero@famaf.unc.edu.ar}

\author{Iv\'an G\'omez Rivera}
\address{FaMAF-CIEM (CONICET), Universidad Nacional de C\'ordoba,
Medina Allende s/n, Ciudad Universitaria, 5000 C\'ordoba, Rep\'ublica Argentina.}
\email{ivan.gomez.rivera@mi.unc.edu.ar}
\thanks{This research was partially supported by an NSERC grant, CONICET
PIP 112-2013-01-00511, 
PIP 112-2012-01-00501, 
MinCyT C\'ordoba, 
FONCYT Pict2013 1391, 
SeCyT-UNC
33620180100983CB}

\subjclass[2010]{17B10, 17B30, 22E27, 16G10}

\keywords{non-semisimple Lie algebras, uniserial representations, indecomposable, socle, radical}

\begin{abstract} Let $\mathfrak{g}_m=\mathfrak{sl}(2)\ltimes V(m)$, $m\ge 1$, where $V(m)$ is the irreducible $\mathfrak{sl}(2)$-module of dimension $m+1$ viewed as an abelian Lie algebra. It is known that the isomorphism classes of uniserial $\mathfrak{g}_m$-modules consist of a family, say of type $Z$, containing modules of arbitrary composition length, and some exceptional modules with composition length $\le 4$.

  Let $V$ and $W$ be two uniserial $\mathfrak{g}_m$-modules of type $Z$. In this paper we obtain the $\mathfrak{sl}(2)$-module decomposition of $\text{soc}(V\otimes W)$ by giving explicitly the highest weight vectors. It turns out that $\text{soc}(V\otimes W)$ is multiplicity free. Roughly speaking, $\text{soc}(V\otimes W)=\text{soc}(V)\otimes \text{soc}(W)$ in half of the cases, and in these cases we obtain the full socle series of $V\otimes W$ by proving that $ \text{soc}^{t+1}(V\otimes W)=\sum_{i=0}^{t} \text{soc}^{i+1}(V)\otimes \text{soc}^{t+1-i}(W)$ for all $t\ge0$.
  
  As applications of these results, we obtain for which $V$ and $W$, the space of $\mathfrak{g}_m$-module homomorphisms $\text{Hom}_{\mathfrak{g}_m}(V,W)$ is not zero, in which case is 1-dimensional. Finally we prove, for $m\ne 2$, that if $U$ is the tensor product of two uniserial $\mathfrak{g}_m$-modules of type $Z$, then the factors are determined by $U$. We provide a procedure to identify the factors from $U$.
\end{abstract}

\maketitle

\section{Introduction}\label{sec:intro}
We fix throughout a field $\F$ of characteristic zero. All Lie algebras and representations considered in this paper are assumed to be finite dimensional over $\F$, unless explicitly stated otherwise.

It is generally acknowledged that the problem of classifying all indecomposable
finite dimensional representations of a Lie
algebra is intractable, one of the most clear manifestation of this
is given in \cite{GP} 
for abelian
Lie algebras of dimension greater than or equal to $2$. 
This is also discussed in \cite{Sa} 
for the 3-dimensional euclidean Lie
algebra $\mathfrak{e}(2)$, and in \cite{M} for virtually any complex Lie algebra other than semisimple
or 1-dimensional.

Rather than attempting to classify all indecomposable modules for a given
Lie algebra, or a family of Lie algebras, it would be very interesting to identify a class of representations that is
sufficiently limited so that we can have a reasonably comfortable handling of them and, at the same time, large enough to include many representations that appear naturally in problems of interest. 
Just to mention an example, 
let $A$ be a finite dimensional (associative or Lie) algebra and let  
$\text{Der}(A)$ be its Lie algebra of derivations. 
Except for very exceptional cases, $\text{Der}(A)$ 
is not semisimple.  
We know that $\text{Der}(A)$ 
acts in various objects associated to $A$, 
for instance in its (Hochschild or Lie) cohomology. 
There are many results in cohomology obtained by considering  
the action of a Levi factor of $\text{Der}(A)$ 
(and using the highest weight theory) disregarding the action
of its solvable radical. If we wanted to describe (and/or make use of) the 
whole $\text{Der}(A)$-module structure of the cohomology of $A$, 
there is no a standard way to do this 
due to the lack of knowledge we have of an appropriate class of  representations of $\text{Der}(A)$. Moreover, this could be specially 
useful if we wanted to describe the whole 
Gerstenhaber algebra $H\!H(A)$ of an 
associative algebra $A$.

Many authors have considered the idea of describing or classifying
a special class of representations of non-semisimple Lie algebras. 
For instance, A. Piard \cite{Pi1} analyzed thoroughly the indecomposable modules $U$,
of the complex Lie algebra $\mathfrak{sl}(2)\ltimes \mathbb{C}^2$,
 such that $U/\text{rad}(U)$ is irreducible. 
 More recently, various families of indecomposable modules over various types of non-semisimple Lie algebras have been constructed and/or 
 classified, see for instance \cite{Ca2, Ca1,CMS, CM, DP, Dd, DKR,J}.
  
On the other hand, we have been systematically studying uniserial representations of 
Lie algebras. 
In the articles \cite{CS_JofAlg, CS_canadian, CS_JofAlgApp, CS_Comm, CGS1, CGS2, CLS,Casati2017IndecomposableMO}
we and other authors have classified all uniserial representations for many different families of Lie algebras. It is worth mentioning that, 
in the theory of finite dimensional representations of associative algebras, the class of uniserial ones is quite relevant, a foundational result here
is due to T. Nakayama \cite{Na} (see also \cite{ASS} or\cite{ARS}) and it states that every finitely generated module over a serial ring is a direct sum of uniserial modules. For more information 
in the associative case we refer the reader mainly to \cite{ASS, ARS, Pu}, see also 
\cite{BH-Z, H-Z,NGB}.
We point out that, for Lie algebras, 
when $\g$ is 1-dimensional, any representation 
is a direct sum of uniserial ones. We do not know if the is
a class of Lie algebras, apart from semisimples, for which this remains true.

If we want to pursue farther the idea of identifying a class of
Lie algebras representations based on the uniserial ones, a natural step forward 
is to study morphisms between them and 
the tensor category that they generate. 
The main goal of this article is to start this project 
with the family of the Lie algebras
$\g_m=\sl(2)\ltimes V(m)$,  $m\ge 1$, 
 where $V(m)$ is the irreducible 
$\sl(2)$-module of dimension $m+1$ viewed as an abelian Lie algebra. 
The uniserial $\g_m$-modules where classified in \cite{CS_JofAlg}
and the isomorphism classes consist of a general family 
$Z(a,\ell)$ and its duals, and some exceptional modules
with composition length $\le 4$. This is described below 
in Theorem \ref{thm.CS_Classification}. 
In the family $Z(a,\ell)$, $a$ and $\ell$ are arbitrary 
non-negative integers, $\ell+1$ is the composition length 
of $Z(a,\ell)$ 
and the socle of $Z(a,\ell)$ is isomorphic to 
the irreducible $\sl(2)$-module of dimension $a+1$. 
We call the uniserial modules $Z(a,\ell)$ and $Z(a,\ell)^*$
\emph{uniserials of type} $Z$, they constitute the vast majority 
of uniserial $\g_m$-modules. 

\medskip

Our main results are the following.  
First,  given two uniserials $V$ and $W$ 
of type $Z$, we obtain in Theorem \ref{thm:main} 
the $\sl(2)$-module decomposition 
of $\text{soc}(V\otimes W)$ by  explicitly giving its
highest weight vectors.
By duality, since the $\sl(2)$-module decomposition 
of $V\otimes W$ follows from the 
Clebsch-Gordan formula, the 
$\sl(2)$-module structure of the radical 
$\text{rad}(V\otimes W)$ can be derived
(see for instance \cite[Chapter V]{ASS}).
It turns out that $\text{soc}(V\otimes W)$ is multiplicity free 
($V\otimes W$ is not at all multiplicity free except for $V$ and $W$ irreducible). In some sense, $\text{soc}(V\otimes W)=\text{soc}(V)\otimes \text{soc}(W)$ in half of the cases
(including when $V=Z(a,\ell)^*$ and $W=Z(b,\ell')^*$). 
It turns out that, in these cases (see Corollary 
\ref{coro:soc_completo}),
\[
\text{soc}^{t+1}(V\otimes W)=\sum_{i=0}^{t}
\text{soc}^{i+1}(V)\otimes 
\text{soc}^{t+1-i}(W)
\]
for all $t\ge0$. 
In other words, this formula holds for all $t$ if and only if it holds for $t=0$.
One of the main steps towards Theorem \ref{thm:main} is to deal with 
the cases $V$ and $W$ with composition lengths equal to 2.
This is done in Theorem \ref{thm:length_2} and
the proof of it required lengthy and precise computations
in which the Clebsch-Gordan coefficients 
(the 3-$j$ symbols) were a crucial tool.
In order to extend Theorem \ref{thm:length_2} to the exceptional
uniserials it is necessary to work out
these computations, but they become harder and so far
we could only arrive to Conjecture \ref{conj:length2}.

Next we study the intertwining operators between uniserials $V$ and $W$ 
of type $Z$. 
From the multiplicity free structure of $\text{soc}(V\otimes W)$
we obtain that $\text{Hom}_{\g_m}(V,W)$ is either zero or 1-dimensional
and we derive from Theorem \ref{thm:main} in 
which cases $\text{Hom}_{\g_m}(V,W)\ne 0$ (see 
Theorem \ref{thm:invariants} and Corollary \ref{thm:intertwining}). 
A well known result of this flavor is the Bernstein-Gelfand-Gelfand
classification of the intertwining operators among Verma modules and
their generalizations (see \cite{BGG}).

Finally we prove, for $m\ne 2$,  
that if $U$ is the tensor product of 
two uniserial modules of type $Z$, then the factors are determined by $U$ (see Theorem \ref{thm:isomorphism}). Moreover, we give explicitly a procedure to identify the factors. 
This question about the uniqueness of the factorization of tensor products  is frequently addressed in the literature for irreducible modules. 
It is well known that in general, 
the tensor product of two modules (even when they are irreducible)
do not determine the factors.
A very basic example would be the tensor product of two irreducible modules that is itself irreducible, 
and this may happen even if the underlying group or algebra is indecomposable and none of the factors is 1-dimensional 
(see for instance \cite{BK} or \cite{MOROTTI_JPAA,MOROTTI_Rep_Theory} and the references within them).
In contrast, a celebrated result of C. S. Rajan \cite{Ra}
states that a tensor product of an arbitrary number of 
irreducible, finite dimensional representations 
of a simple Lie algebra over a field of characteristic zero determines 
uniquely the factors. This is also true in other categories of 
modules, see \cite{Venkatesh2012UniqueFO} for a generalization of 
Rajan's result to a natural category of representations of symmetrizable Kac-Moody algebras, or \cite{Reif2021OnTP} for a unique 
factorization result for some special irreducible 
representations of Borcherds-Kac-Moody algebras.
We are not aware of results dealing with this problem within 
a much larger class of modules such as the class of uniserials. 
We think that Theorem \ref{thm:isomorphism} remains valid for $m=2$
and that our proof only requires a small adjustment that we did not find so far. 

We close this introduction with some open 
questions closely related to this paper 
that are of our interest. 
\begin{enumerate}[\hspace{3mm}]
\item[\tiny$\bullet$]
What is the $\sl(2)$-module structure of $\text{soc}(V\otimes W)$
when $V$ and $W$ are exceptional uniserial $\g_m$-modules?
As we mentioned, we think that the answer for modules of composition length 2 is 
given in Conjecture \ref{conj:length2}. 
The general case should follow without major difficulties from 
the result for the case of composition length 2.

\item[\tiny$\bullet$]
Is it true, for $m=2$, the statement of 
Theorem \ref{thm:isomorphism}?

\item[\tiny$\bullet$]
Is it possible to extend  
Theorem \ref{thm:isomorphism} to an arbitrary number of uniserial modules?

\item[\tiny$\bullet$]
Given two uniserials $\g_m$-modules $V$ and $W$, what are, up to isomorphism, the extensions of $V$ by $W$? 
Is it possible to obtain, for any $m$,  
results that are similar to those obtained by A. Piard \cite{Pi1} for $m=1$?

\item[\tiny$\bullet$]
For which uniserials  $\g_m$-modules $V$ and $W$ is $V\otimes W$ indecomposable?
\end{enumerate}

\section{Preliminaries}
\subsection{The Clebsch-Gordan coefficients}
\label{Subsec.Clebsch-Gordan}
Recall that $\F$ is a field of characteristic zero and that all Lie algebras and representations are assumed to be finite dimensional 
over $\F$, unless explicitly stated otherwise.
Let 
\begin{equation}\label{eq.basis_sl2}
e=\begin{pmatrix}
0 & 1 \\
0 & 0
\end{pmatrix},\qquad
h=\begin{pmatrix}
1 & 0 \\
0 & -1
\end{pmatrix},\qquad
f=\begin{pmatrix}
0 & 0 \\
1 & 0
\end{pmatrix}
\end{equation}
be the standard basis of $\sl(2)$.
Let $V(a)$ be the irreducible $\sl(2)$-module with highest weight $a\ge0$.
We fix a basis $\{v_0^a,\dots,v_a^a\}$ of $V(a)$ relative to which the basis $\{e,h,f\}$ acts as follows:
\begin{align*}
e\, v_k^{a}=&\sqrt{
\frac{a}{2}\left(\frac{a}{2}+1\right)-
\left(\frac{a}{2}-k+1\right)\left(\frac{a}{2}-k\right)
}
v_{k-1}^{a},\\[2mm]
h\, v_k^{a}=&(a-2k)v_k^{a},\\[2mm]
f\, v_k^{a}=&\sqrt{
\frac{a}{2}\left(\frac{a}{2}+1\right)-
\left(\frac{a}{2}-k-1\right)\left(\frac{a}{2}-k\right)
}
v_{k+1}^{a},
\end{align*} 
where $0\leq k\leq a$ and $v_{-1}^a=0=v_{a+1}^a$.
The basis $\{v_0^a,\dots,v_a^a\}$ has been chosen in a convenient way to
introduce below the Clebsch-Gordan coefficients.
Note that, if we denote by $(x)_a$ the matrix of 
$x\in\sl(2)$ relative to the basis $\{v_0^a,\dots,v_a^a\}$, then 
$\{(e)_1,(h)_1,(f)_1\}$
are as in \eqref{eq.basis_sl2},
and 
\begin{equation*}
(e)_2=\begin{pmatrix}
0 & \sqrt{2} & 0 \\
0 & 0 & \sqrt{2} \\
0 & 0 & 0
\end{pmatrix},\qquad
(h)_2=\begin{pmatrix}
2 & 0 & 0 \\
0 & 0 & 0 \\
0 & 0 & -2
\end{pmatrix},\qquad
(f)_2=\begin{pmatrix}
0 & 0 & 0 \\
\sqrt{2} & 0 &  0 \\
0 & \sqrt{2} & 0
\end{pmatrix}. 
\end{equation*}
This means 
that we may assume that 
$\{v_0^2,v_1^2,v_2^2\}=\{-e,\frac{\sqrt{2}}{2}h,f\}$.

We know that $V(a)\simeq V(a)^*$ as $\sl(2)$-modules. 
More precisely, if $\{(v_0^a)^*,\dots,(v_a^a)^*\}$ 
is the dual basis of $\{v_0^a,\dots,v_a^a\}$ then 
the map 
\begin{equation}\label{eq.dual}
\begin{split}
V(a) & \rightarrow V(a)^* \\
v_k^a & \mapsto (-1)^{a-k} (v_{a-k}^a)^* 
\end{split}
\end{equation}
gives an explicit $\sl(2)$-isomorphism.

It is well known that the tensor product decomposition
of two irreducible $\sl(2)$-modules $V(a)$ and $V(b)$
 is 
\begin{equation}\label{eq.tensor}
V(a)\otimes V(b)\simeq V(a+b)\oplus V(a+b-2) \oplus \cdots \oplus V(|a-b|).
\end{equation}
This is the well known Clebsch-Gordan formula.
The 
\emph{Clebsch-Gordan  coefficients}
$CG(j_{1},m_{1};j_{2},m_{2}\mid j_3,m_3)$ are defined below and they 
provide an explicit $\sl(2)$-embedding 
$V(c) \rightarrow V(a)\otimes V(b)$ 
which is the following
\begin{align*}
V(c) & \rightarrow V(a)\otimes V(b) \\
v_k^c & \mapsto v_k^{a,b,c}
\end{align*}
where, by definition, 
\begin{equation}\label{eq.Vc_en_tensor}
v_k^{a,b,c}=\sum_{i,j} 
CG(\tfrac{a}{2},\tfrac{a}{2}-i;\,\tfrac{b}{2},\tfrac{b}{2}-j
\,|\,\tfrac{c}{2},\tfrac{c}{2}-k)\,
 v_i^a\otimes v_j^b,
\end{equation}
where the sum runs over all $i,j$ such that 
$\tfrac{a}{2}-i+\tfrac{b}{2}-j=\tfrac{c}{2}-k$
(in fact we could let $i,j$ run freely since the Clebsch-Gordan
coefficient involved is zero if $\tfrac{a}{2}-i+\tfrac{b}{2}-j\ne\tfrac{c}{2}-k$).
Since 
\begin{equation}\label{eq.Hom}
\text{Hom}(V(b),V(a))\simeq V(b)^*\otimes V(a) 
\simeq V(a)\otimes V(b) 
\end{equation}
it follows from \eqref{eq.dual} and \eqref{eq.Vc_en_tensor} that 
the map $V(c) \rightarrow \text{Hom}(V(b),V(a))$
given by 
\begin{align}
v_k^c  & \mapsto 
\sum_{i,j} 
CG(\tfrac{a}{2},\tfrac{a}{2}-i;\,\tfrac{b}{2},\tfrac{b}{2}-j
\,|\,\tfrac{c}{2},\tfrac{c}{2}-k)\,
 v_i^a\otimes v_j^b, \notag \\
 & \mapsto 
\sum_{i,j} (-1)^{b-j}
CG(\tfrac{a}{2},\tfrac{a}{2}-i;\,\tfrac{b}{2},\tfrac{b}{2}-j
\,|\,\tfrac{c}{2},\tfrac{c}{2}-k)\,
 v_i^a\otimes (v_{b-j}^b)^*, \notag \\ \label{eq.embedding_Hom}
 & \mapsto 
\sum_{i,j} (-1)^{j}
CG(\tfrac{a}{2},\tfrac{a}{2}-i;\,\tfrac{b}{2},-\tfrac{b}{2}+j
\,|\,\tfrac{c}{2},\tfrac{c}{2}-k)\,\,
(v_{j}^b)^* \otimes v_i^a
\end{align}
is an $\sl(2)$-module homomorphism.

We now recall briefly the basic definitions and 
facts about the Clebsch-Gordan coefficients.
We will mainly follow \cite{VMK}.

Given three non-negative integers or half-integers  $j_1,j_2,j_3$, we say that they \emph{satisfy
the triangle condition} if
$j_1+j_2+j_3$ is an integer and 
they can be the side lengths of a (possibly degenerate) 
triangle (that is
$|j_1-j_2|\le j_3\le j_1+j_2$).
We now define (see \cite[\S8.2, eq.(1)]{VMK})
\[
 \Delta(j_1,j_2,j_3)=\sqrt{\frac{(j_1+j_2-j_3)!(j_1-j_2+j_3)!(-j_1+j_2+j_3)!}{(j_1+j_2+j_3+1)!}}
 \]
if $j_1, j_2, j_3$ satisfies the triangle condition;
otherwise, we set $\Delta(j_1,j_2,j_3)=0$.

If in addition $m_1$, $m_2$ and $m_3$ are three integers or half-integers then 
the corresponding \emph{Clebsch-Gordan coefficient}
\[ 
CG(j_{1},m_{1};j_{2},m_{2}| j_3,m_3)
\]
is zero unless  $m_1+m_2= m_3$ and $|m_i|\le j_i$ for $i=1,2,3$. In this case, the following formula is valid for $m_3\ge 0$ and $j_1\ge j_2$ (see \cite[\S8.2, eq.(3)]{VMK})
 \begin{multline*}
 CG(j_{1},m_{1};j_{2},m_{2}\mid j_3,m_3)=
 \Delta(j_1,j_2,j_3)\,\sqrt{(2j_3+1) } \\[1mm]
 \times \sqrt{(j_1+m_1)!(j_1-m_1)!(j_2+m_2)!(j_2-m_2)!(j_3+m_3)!(j_3-m_3)! } \\[1mm]
 \times
 \sum_r\frac{(-1)^r}{r!(j_1\!+\!j_2\!-\!j_3\!-\!r)!(j_1\!-\!m_1\!-\!r)!(j_2\!+\!m_2\!-\!r)!(j_3\!-\!j_2\!+\!m_1\!+\!r)!(j_3\!-\!j_1\!-\!m_2\!+\!r)!},
 \end{multline*}
 where the sum runs through all integers
$r$ for which the argument of every factorial is non-negative.
If either $m_3< 0$ or $j_1< j_2$ 
we have
\begin{align}
 CG(j_{1},m_{1};j_{2},m_{2}\mid j_3,m_3)
 & = (-1)^{j_1+j_2-j_3}\;
 CG(j_{1},-m_{1};j_{2},-m_{2}\mid j_3,-m_3)\notag
  \\[2mm]\label{eq:swap}
 & = (-1)^{j_1+j_2-j_3}\;
 CG(j_{2},m_{2};j_{1},m_{1}\mid j_3,m_3). 
\end{align}
In addition, it also holds
\begin{equation}\label{eq.simmetryCG}
 CG(j_{1},m_{1};j_{2},m_{2}\mid j_3,m_3)
=(-1)^{j_1-m_1}
\sqrt{\frac{2j_3+1}{2j_2+1}}\;
 CG(j_{1},m_{1};j_{3},-m_{3}\mid j_2,-m_2).
\end{equation}
In the following sections we will need the following  particular values of the Clebsch-Gordan coefficients.
Here $a,b$ are integers and $i=0,\dots,a$, 
$j=0,\dots,b$. 

\begin{equation}\label{eq.extremoCG0}
CG(\tfrac{a}{2},\tfrac{a}{2}-i;\,\tfrac{b}{2},\tfrac{b}{2}-j
\,|\,\tfrac{a+b}{2},\tfrac{a+b}{2}-i-j)
=
\sqrt{\frac{a!b!(a+b-i-j)!(i+j)!}{i!j!(a+b)!(a-i)!(b-j)!}},
\end{equation}
\begin{multline}\label{eq.extremoCG1}
CG(\tfrac{a}{2},\tfrac{a}{2}-i;\,\tfrac{b}{2},j-\tfrac{b}{2}
\,|\,\tfrac{a-b}{2},\tfrac{a-b}{2}-i+j) \\
=(-1)^j
\sqrt{\frac{(a-i)!\;i!\;b!\;(a-b+1)!}
{(a+1)!\;j!\;(b-j)!\;(a-b-i+j)!\;(i-j)!}},
\end{multline}
\begin{multline}\label{eq.extremoCG11}
CG(\tfrac{a}{2},i-\tfrac{a}{2};\,\tfrac{b}{2},\tfrac{b}{2}-j
\,|\,\tfrac{b-a}{2},\tfrac{b-a}{2}+i-j) \\
=(-1)^{a}CG(\tfrac{b}{2},\tfrac{b}{2}-j;\,
\tfrac{a}{2},i-\tfrac{a}{2}
\,|\,\tfrac{b-a}{2},\tfrac{b-a}{2}+i-j) \\
=(-1)^j
\sqrt{\frac{(b-j)!\;j!\;a!\;(b-a+1)!}
{(b+1)!\;i!\;(a-i)!\;(b-a-j+i)!\;(j-i)!}},
\end{multline}
\begin{multline}\label{eq.extremoCG}
CG(
\tfrac{a}{2},\tfrac{a}{2}-i;\,
\tfrac{b}{2},\tfrac{b}{2}-j\,|\,
\tfrac{a+b}{2}-i-j,\tfrac{a+b}{2}-i-j) \\
=
(-1)^i
\sqrt{\frac{(a+b-2i-2j+1)!\;(i+j)!\;(a-i)!\;(b-j)!}
{(a+b-i-j+1)!\;(a-i-j)!\;(b-i-j)!\;i!\;j!}}.
\end{multline}

\subsection{Uniserial representations}

Given a Lie algebra $\g$ and a $\g$-module $V$, we say that $V$ is \emph{uniserial}  if it admits a unique composition series. 
In other words, $V$ is uniserial  if the socle series
\[
0 = \text{soc}^0(V)\subset \text{soc}^1(V) \subset \cdots \subset \text{soc}^n(V) = V
\]
is a composition series of $V$, that is, the socle factors $\text{soc}^{i}(V)/\text{soc}^{i-1}(V)$ are irreducible for all $1\le i \le n$. 
Recall that $\text{soc}^1(V)=\text{soc}(V)$ 
is the sum of all irreducible   
$\g$-submodules of $V$ and 
$\text{soc}^{i}(V)/\text{soc}^{i-1}(V)
=\text{soc}(V/\text{soc}^{i-1}(V))$.
Note that for uniserial modules, 
the \emph{composition length} $n$  of $V$ coincides with
 its socle length.

If the
 Levi decomposition of $\g$ is $\g = \mathfrak{s} \ltimes \mathfrak{r}$, (with $\mathfrak{r}$ the solvable radical and $\mathfrak{s}$ semisimple) we may choose irreducible 
 $\mathfrak{s}$-submodules $V_i\subset V$, $1\le i \le n$, such that 
 \begin{equation}\label{eq.soc_decomp}
 V=V_1\oplus \cdots \oplus V_n 
 \end{equation}
with 
$V_i
 \simeq \text{soc}^{i}(V)/\text{soc}^{i-1}(V)$ 
 as   $\mathfrak{s}$-modules and 
 \[
 \mathfrak{r} V_i \subset V_1\oplus \cdots \oplus V_i. 
 \]
In fact, if 
$[\mathfrak{s},\mathfrak{r}]=\mathfrak{r}$, then $\mathfrak{r} V_i \subset V_1\oplus \cdots \oplus V_{i-1}$, 
see Lemma \ref{lemma:soc} below. 
We say that \eqref{eq.soc_decomp} is the 
\emph{socle decomposition} of $V$, note that the order of the summands is relevant.

\begin{lemma}\label{lemma:soc}
If $\mathfrak{r}=[\mathfrak{s},\mathfrak{r}]$, then
$\text{soc}(U)=U^\mathfrak{r}$ for any $\g$-module $U$.
\end{lemma}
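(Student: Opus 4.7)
The plan is to prove the two inclusions separately, after first verifying that the right-hand side is a $\g$-submodule of $U$. The $\mathfrak{r}$-invariant subspace $U^\mathfrak{r}=\{u\in U : \mathfrak{r}\cdot u=0\}$ is obviously $\mathfrak{r}$-stable; for $s\in\mathfrak{s}$, $r\in\mathfrak{r}$, and $u\in U^\mathfrak{r}$, the identity $r(su)=[r,s]u+s(ru)=[r,s]u$ together with $[r,s]\in\mathfrak{r}$ shows that $su\in U^\mathfrak{r}$. Hence $U^\mathfrak{r}$ is a $\g$-submodule on which $\mathfrak{r}$ acts as zero.

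The key step is to prove that the solvable radical $\mathfrak{r}$ annihilates every finite-dimensional irreducible $\g$-module $W$. Extending scalars to the algebraic closure $\overline{\F}$, the module $W\otimes_\F\overline{\F}$ is a semisimple $\g_{\overline{\F}}$-module, and it suffices to show that $\mathfrak{r}_{\overline{\F}}$ acts as zero on each irreducible constituent $W'$. By Lie's theorem, $\mathfrak{r}_{\overline{\F}}$ acts on $W'$ through a character $\lambda:\mathfrak{r}_{\overline{\F}}\to\overline{\F}$. For any $s\in\mathfrak{s}_{\overline{\F}}$ and $r\in\mathfrak{r}_{\overline{\F}}$, the operator $[s,r]$ on $W'$ acts as the scalar $\lambda([s,r])$, but it is also the commutator of two linear maps, so its trace vanishes. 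Taking the trace on $W'$ gives $(\dim W')\lambda([s,r])=0$, and since the characteristic is zero we conclude $\lambda([s,r])=0$. Hence $\lambda$ vanishes on $[\mathfrak{s}_{\overline{\F}},\mathfrak{r}_{\overline{\F}}]=\mathfrak{r}_{\overline{\F}}$, so $\lambda\equiv 0$ and $\mathfrak{r}$ acts trivially on $W$.

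Combining these observations, the inclusion $\text{soc}(U)\subset U^\mathfrak{r}$ is immediate: $\text{soc}(U)$ is the sum of all irreducible $\g$-submodules of $U$, and each of them is annihilated by $\mathfrak{r}$. For the reverse inclusion, since $\mathfrak{r}$ acts trivially on $U^\mathfrak{r}$, the $\g$-action on $U^\mathfrak{r}$ factors through $\g/\mathfrak{r}\simeq\mathfrak{s}$; by Weyl's theorem on complete reducibility, $U^\mathfrak{r}$ decomposes as a direct sum of irreducible $\mathfrak{s}$-modules, each of which is simultaneously a $\g$-submodule (with $\mathfrak{r}$ acting as zero) and irreducible as such. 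Therefore $U^\mathfrak{r}\subset\text{soc}(U)$.

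The main technical hurdle is the character-vanishing argument in the second paragraph, where the hypothesis $\mathfrak{r}=[\mathfrak{s},\mathfrak{r}]$ is used essentially; everything else is a routine assembly from the semisimplicity of $\mathfrak{s}$ and the definition of the socle. The passage to $\overline{\F}$ is harmless because the conclusion $\rho(\mathfrak{r})=0$ is an $\F$-linear statement that descends from the algebraic closure.
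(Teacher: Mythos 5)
Your proof is correct and follows the same two-inclusion outline as the paper's: show $U^\mathfrak{r}$ is a completely reducible $\g$-submodule (giving $U^\mathfrak{r}\subset\mathrm{soc}(U)$ via Weyl), and show $\mathfrak{r}$ annihilates every irreducible $\g$-submodule (giving $\mathrm{soc}(U)\subset U^\mathfrak{r}$). The only substantive difference is in the second step: the paper simply cites Bourbaki (Chap.~I, \S5.3) for the fact that $[\g,\g]\cap\mathfrak{r}$ kills finite-dimensional irreducibles in characteristic zero, then notes that $\mathfrak{r}=[\mathfrak{s},\mathfrak{r}]$ forces $\mathfrak{r}=[\g,\g]\cap\mathfrak{r}$; you instead unpack that citation into a self-contained argument via base change to $\overline{\F}$ and a trace computation. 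One caveat worth noting: the assertion that ``by Lie's theorem'' $\mathfrak{r}_{\overline{\F}}$ acts by a single global character on the irreducible constituent $W'$ is really a corollary of Lie's theorem, not the theorem itself — $W'$ is irreducible over $\g_{\overline{\F}}$, not over $\mathfrak{r}_{\overline{\F}}$, so Lie's theorem proper only produces a common eigenvector, and promoting the weight space to all of $W'$ already requires the invariance lemma $\lambda([\g,\mathfrak{r}])=0$, which in turn is the conclusion your trace argument re-derives. Read charitably (as invoking the standard corollary and then giving a clean, self-contained proof of the vanishing of $\lambda$ on $[\mathfrak{s},\mathfrak{r}]$), the argument is sound and the approach is essentially the paper's with the external reference replaced by an explicit verification.
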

\begin{proof}
On the one hand, $U^\mathfrak{r}$ is a completely reducible 
$\mathfrak{s}$-submodule of $U$ and hence, 
a completely reducible $\g$-submodule, thus  
$U^\mathfrak{r}\subset \text{soc}(U)$.
On the other hand, if $U_1$ is an irreducible $\g$-submodule of $U$, 
since the characteristic of the field $\F$ is 0, 
we know $\mathfrak{r}=[\g,\g]\cap\mathfrak{r}$ acts trivially on $U_1$ 
(see \cite[Chapitre 1, \S5.3]{Bo}).
Hence $U_1\subset U^\mathfrak{r}$ and therefore 
$\text{soc}(U)\subset U^\mathfrak{r}$. 
\end{proof}

\begin{lemma}\label{lemma:soc_series}
Assume that  $\mathfrak{r}=[\mathfrak{s},\mathfrak{r}]$ and 
let $V$ be a $\g$-module such that it has a vector space
 decomposition $V=V_1\oplus \cdots \oplus V_n$ such that 
$\mathfrak{r} V_k \subset V_{k-1}$ for all $k=2,\dots,n$.
If $\text{soc}(V)=V_1$ then 
$\text{soc}^k(V)=V_1\oplus \cdots \oplus V_k$
for all $k=1,\dots,n$.
\end{lemma}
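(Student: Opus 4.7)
The plan is to proceed by induction on $k$. The base case $k=1$ is exactly the hypothesis $\operatorname{soc}(V)=V_1$. For the inductive step, assume $\operatorname{soc}^{k-1}(V)=V_1\oplus\cdots\oplus V_{k-1}$ and consider the quotient $\bar V=V/\operatorname{soc}^{k-1}(V)$. By the definition of the socle series, it suffices to prove that $\operatorname{soc}(\bar V)$ coincides with $\bar V_k$, the image of $V_k$ in $\bar V$; then its preimage in $V$ is $V_1\oplus\cdots\oplus V_k$, as required.

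Since $\mathfrak{r}=[\mathfrak{s},\mathfrak{r}]$, Lemma \ref{lemma:soc} applied to $\bar V$ gives $\operatorname{soc}(\bar V)=\bar V^{\mathfrak{r}}$, so I would work with $\mathfrak{r}$-invariants. The inclusion $\bar V_k\subset\bar V^{\mathfrak{r}}$ is immediate from the hypothesis: for $x\in V_k$, $\mathfrak{r} x\subset V_{k-1}\subset\operatorname{soc}^{k-1}(V)$, hence $r\bar x=0$ in $\bar V$.

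For the reverse inclusion, pick $\bar x\in\bar V^{\mathfrak{r}}$ and choose a representative $x=x_k+x_{k+1}+\cdots+x_n$ with $x_j\in V_j$. For every $r\in\mathfrak{r}$, $rx\in\operatorname{soc}^{k-1}(V)=V_1\oplus\cdots\oplus V_{k-1}$; on the other hand, $rx=\sum_{j\ge k}rx_j$ with $rx_j\in V_{j-1}$ by hypothesis. Comparing components with respect to the direct sum $V=V_1\oplus\cdots\oplus V_n$, I conclude $rx_j=0$ for all $j\ge k+1$ and all $r\in\mathfrak{r}$. Thus each such $x_j$ lies in $V^{\mathfrak{r}}=\operatorname{soc}(V)=V_1$, again by Lemma \ref{lemma:soc} together with the standing hypothesis. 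But $x_j\in V_j$ with $j\ge 2$ and $V_j\cap V_1=0$, forcing $x_j=0$ for all $j\ge k+1$. Therefore $x\in V_k$, so $\bar x\in\bar V_k$, completing the inductive step.

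The argument is largely bookkeeping; the only subtle point is that the inductive step must invoke the \emph{original} hypothesis $\operatorname{soc}(V)=V_1$ (not just the inductive hypothesis about $\operatorname{soc}^{k-1}$), since it is precisely this input that rules out extra invariants in the higher components $V_{k+1},\dots,V_n$. If one were to try to prove the statement only in terms of the quotient, without descending back to $V$, one would lose control of $\bar V^{\mathfrak{r}}$; passing to the preimage and using the direct sum decomposition of $V$ is what makes the cancellation work cleanly.
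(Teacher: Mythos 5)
Your proof is correct and follows essentially the same route as the paper's: induction on $k$, reduction to $\mathfrak{r}$-invariants in the quotient via Lemma \ref{lemma:soc}, pushing the invariance condition back into $V$, and using the crucial hypothesis $\operatorname{soc}(V)=V_1$ (i.e.\ $V^{\mathfrak{r}}=V_1$) to rule out contributions from $V_{k+1},\dots,V_n$. The only cosmetic difference is that you compare components directly in the direct sum decomposition of $V$, whereas the paper first pushes everything through the projection $p$ and then uses injectivity of $p$ on the relevant summands; the underlying argument is identical.
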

\begin{proof}
We proceed by induction on $k$. 
Assume the statement true 
for all $k$ with $1\le k< k_0 <n$ and let us prove 
that 
$\text{soc}^{k_0}(V)=V_1\oplus \cdots \oplus V_{k_0}$. 

Let $U=V/\text{soc}^{k_0-1}(V)$ and let 
$p:V\to U$ the corresponding projection. 
We point out that, since
\[
\text{soc}^{k_0-1}(V)=V_1\oplus \cdots \oplus V_{k_0-1}
\] 
is a 
$\g$-submodule, it follows that $p$ is a $\g$-module homomorphism
and 
\begin{equation}\label{eq:dec_U}
U=p(V_{k_0})\oplus \cdots \oplus p(V_{n})
\end{equation}
as vector spaces. 

We know, by the definition of the socle series, that
$\text{soc}^{k_0}(V)$ is the $\g$-submodule of $V$ satisfying 
\[
\text{soc}^{k_0}(V)/\text{soc}^{k_0-1}(V)=\text{soc}(U)
\] 
and it follows from Lemma \ref{lemma:soc} that 
\[
\text{soc}(U)=
\big(p(V_{k_0})\oplus \cdots \oplus p(V_{n})\big)^{\mathfrak{r}}.
\]
Let  $v=v_{k_0}+v_{k_0+1}+\hdots+v_n$ with 
$v_k\in V_{k}$, 
 $k=k_0,\dots,n$, such that $Xp(v)=0$
  for all $X\in\mathfrak{r}$.
  Since $p$ is a $\g$-module homomorphism, we have 
\[
p(Xv_{k_0})+p(Xv_{k_0+1})+\hdots+p(Xv_{n})
=0.
\]
The hypothesis $\mathfrak{r} V_k \subset V_{k-1}$  
implies $X(p(v_{k}))  = p(Xv_{k})\in p(V_{k-1})$ 
and hence $p(Xv_{k_0})=0$ and 
\[
p(Xv_{k_0+1})+\hdots+p(Xv_{n})
=0.
\]
Since $p|_{V_i}$ is injective for all $i\ge k_0$, 
it follows from \eqref{eq:dec_U} that
\[
Xv_i=0,\quad \text{for all $i\ge k_0+1$ and all $X\in\mathfrak{r}$.}
\]
Finally, it follows from 
Lemma \ref{lemma:soc} and the 
hypothesis $\text{soc}(V)=V_1$,
that $V^{\mathfrak{r}}=V_1$.
This implies that $v_i=0$ for all $i\ge k_0+1$.
Therefore, we have proved that 
$\big(
p(V_{k_0})\oplus \cdots \oplus p(V_{n})\big)^{\mathfrak{r}}=p(V_{k_0})$
and thus 
$\text{soc}(U)=p(V_{k_0})$.
This shows that 
\[
\text{soc}^{k_0}(V)=V_1\oplus \cdots \oplus V_{k_0}
\]
and the induction step is complete.
\end{proof}

\subsection{Uniserial representations of \texorpdfstring{$\sl(2)\ltimes V(m)$}{sl(2)xV(m)}.}\label{wdos}

In \cite{CS_JofAlg} it is obtained 
the classification, up to isomorphism,
of all the uniserial representations of the Lie algebra $\sl(2)\ltimes V(m)$, $m\ge1$,
when the underlying field is $\C$. Nevertheless, the classification remains true over any field $\F$ of characteristic 0. 
The main ingredients of this classification are 
the modules $E(a,b)$ and $Z(a,\ell)$ that we present below.

From now on, we fix $m\ge 1$ and set
$\g_m=\sl(2)\ltimes V(m)$. 
We have 
\[
\mathfrak{s} =\sl(2)\quad\text{ and }\quad \mathfrak{r} =[\mathfrak{s}, \mathfrak{r}]=V(m).
\]
It will be useful to have a special notation for the 
basis $\{v_0^m,\dots,v_m^m\}$ of $\mathfrak{r}=V(m)$ as part of the Lie algebra $\g_m$. Thus, we will denote the basis of $\mathfrak{r}$ by 
$\{e_0,\dots,e_m\}$.

If $a$ and $b$ are non-negative integers such that 
$\frac{m}2,\frac{a}2,\frac{b}2$ satisfy the triangle condition, 
it follows from \eqref{eq.tensor} and \eqref{eq.Hom}
that, up to scalar, there is a unique
$\sl(2)$-module homomorphism 
\[
\mathfrak{r}=V(m)\to \text{Hom}(V(b),V(a)).
\] 
This produces an action of 
$\mathfrak{r}$ on $V(a)\oplus V(b)$ given by 
$\mathfrak{r} V(a)=0$ and 
\begin{equation}\label{eq.actionV(m)}
e_s\, v_j^{b}=(-1)^j\,\sum_{i=0}^a
 CG(\tfrac{a}{2},\tfrac{a}{2}-i;\,\tfrac{b}{2},-\tfrac{b}{2}+j
\,|\,\tfrac{m}{2},\tfrac{m}{2}-s)\,
v_{i}^{a},\qquad s=0,\dots,m.
\end{equation}
Note that this is the same as \eqref{eq.embedding_Hom}.
Note also that the above sum has, in fact, at most  one summand, that is
\begin{equation}\label{eq.actionV(m)Bis}
e_s\, v_j^{b}=
\begin{cases}
0,& \text{if $i\ne j+s+\frac{a-b-m}{2}$;} \\[2mm]
(-1)^j CG(\tfrac{a}{2},\tfrac{a}{2}-i;\,\tfrac{b}{2},-\tfrac{b}{2}+j
\,|\,\tfrac{m}{2},\tfrac{m}{2}-s)\,\,v_{i}^{a},& 
\text{if $i=j+s+\frac{a-b-m}{2}$.}
\end{cases}
\end{equation}

This action, combined with the action of $\sl(2)$  defines a uniserial 
$\g_m$-module structure with composition length 2 on 
\[
E(a,b)=V(a)\oplus V(b).
\]
It is straightforward to see that 
$E(a,b)^*\simeq E(b,a)$.
The action given in \eqref{eq.actionV(m)}
is the main building block for all other uniserial 
$\g_m$-modules as follows. 

The above construction can be extended to arbitrary composition length 
\[
V(a_0)\oplus V(a_1)\oplus\cdots\oplus V(a_\ell)
\]
only when the sequence $a_i$ is monotone and $m=|a_i-a_{i-1}|$,  for all $i=1,\dots,\ell$.
More precisely, for $\alpha$ and $\ell$ non-negative integers, let 
$Z(\alpha,\ell)$ be the uniserial $\g_m$-module defined by
 \begin{equation}\label{eq.soc_decomp_Z}
Z(\alpha,\ell)=V(\alpha)\oplus V(\alpha+m)\oplus \cdots \oplus V(\alpha+\ell m)
 \end{equation}
as $\sl(2)$-module with
action of $\mathfrak{r}$
sending
\[
0\longleftarrow
V(\alpha)\longleftarrow 
V(\alpha+2m)\longleftarrow \dots\longleftarrow 
V(\alpha+\ell m)
\]
as indicated in \eqref{eq.actionV(m)}
(with $a=\alpha+(i-1)m$, $b=\alpha+im$, 
for $i=1,\dots,\ell$).

We notice that $Z(\alpha,0)=V(\alpha)$ ($\mathfrak{r}$ acts trivially) and 
$Z(\alpha,1)=E(\alpha,\alpha+m)$. 
The uniserial modules $Z(\alpha,\ell)$ 
and their duals will be called \emph{of type $Z$}, and they are the unique  
isomorphism classes of uniserial $\g_m$-modules of 
composition length $\ell+1$ for $\ell \ge 4$. 

For composition lengths $3$ and $4$, very few other
ways to ``combine'' the modules $E(a,b)$ are possible.
For composition length equal to $3$, given $0\le c\le 2m$ and 
$c\equiv 2m\mod 4$, 
 let 
\[
E_3(c)=
V(0)\oplus V(m)\oplus V(c)
\]
as $\sl(2)$-modules with action of 
$\mathfrak{r}$ sending 
\begin{center}
\begin{tikzpicture}[->,>=stealth',auto,node distance=2cm,thick]
  \node (0) {$0$};
  \node (1) [right of=0] {$V(0)$};
  \node (2) [right of=1] {$V(m)$};
  \node (3) [right of=2] {$V(c)$};

  \path[every node/.style={font=\sffamily\small}]
    (1) edge node [right] {} (0)
    (2) edge node [right] {} (1)
    (3) edge node [right] {} (2);
\end{tikzpicture}
\end{center}
  with the maps $V(c)\to V(m)$ and $V(m)\to V(0)$
given by \eqref{eq.actionV(m)}.

For composition length equal to $4$, if $m\equiv 0\mod 4$, 
there is a family of $\g_m$-modules, parameterized
by a non-zero scalar $t\in\F$, with a fixed socle decomposition. This is defined by 
\[
E_4(t)=
V(0)\oplus V(m)\oplus V(m)\oplus V(0)
\]
as $\sl(2)$-modules with action of 
$\mathfrak{r}$ sending the $\sl(2)$-modules as shown by the arrows
\begin{center}
\begin{tikzpicture}[->,>=stealth',auto,node distance=2cm,thick]
  \node (0) {$0$};
  \node (1) [right of=0] {$V(0)$};
  \node (2) [right of=1] {$V(m)$};
  \node (3) [right of=2] {$V(m)$};
  \node (4) [right of=3] {$V(0)$};

  \path[every node/.style={font=\sffamily\small}]
    (1) edge node [right] {} (0)
    (2) edge node [right] {} (1)
    (3) edge node [right] {} (2)
    (4) edge node [right] {} (3)
    (4) edge[bend right] node [left] {} (2);
\end{tikzpicture}
\end{center}
where the horizontal arrows are 
 given by \eqref{eq.actionV(m)} 
 and the bent arrow is $t$ times \eqref{eq.actionV(m)}.
We can now state one of the main results of \cite[Thm 10.1]{CS_JofAlg}.

\begin{theorem}\label{thm.CS_Classification}
The following list describes all the isomorphism classes of uniserial representations 
of $\g_m=\sl(2)\ltimes V(m)$.

\medskip

\noindent
\begin{tabular}{ll} 
Length 1. & $Z(a,0)=V(a)$, $a\ge0$. \\[2mm]
Length 2. & $E(a,b)$, with $a+b\equiv m\mod 2$ and 
 $0\le |a-b|\leq m\leq a+b$.  \\[2mm]
Length 3. & $Z(a,2)$, $Z(a,2)^*$, $a\ge0$; and \\[1mm]
      & $E_3(c)$ with $c\equiv 2m \mod 4$ and $0\le c\leq 2m$. \\[2mm]
Length 4. & $Z(a,3)$, $Z(a,3)^*$, $a\ge0$; and  \\[1mm]
      & $E_4(t)$, with $0\ne t\in\F$ (this exists only if $m\equiv 0\mod 4$). \\[2mm]
Length $\ell\geq 5$. & $Z(a,\ell-1)$, $Z(a,\ell-1)^*$, $a\ge0$.   \\[2mm]
\end{tabular}

\end{theorem}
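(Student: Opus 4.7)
The plan is to reconstruct any uniserial $\g_m$-module $V$ from its socle decomposition and then determine which choices of data yield a genuine module. Write $V=V_0\oplus V_1\oplus\cdots\oplus V_\ell$ as in \eqref{eq.soc_decomp} with $V_i\simeq V(a_i)$ as $\sl(2)$-modules. By Lemma \ref{lemma:soc_series}, $\mathfrak{r}=V(m)$ strictly shifts layers down, so the $\g_m$-structure is packaged into a family of $\sl(2)$-homomorphisms $\phi_{i,j}\colon V(m)\to\text{Hom}(V_i,V_j)$ for $j<i$. Uniseriality is equivalent to requiring that every adjacent map $\phi_{i,i-1}$ be nonzero, for otherwise $V_i\oplus V_{i+1}\oplus\cdots$ would split off as a proper submodule whose socle differs from $V_0$.

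Invoking Schur's lemma together with the Clebsch-Gordan formula \eqref{eq.tensor}, the space $\text{Hom}_{\sl(2)}\bigl(V(m),\text{Hom}(V(a_i),V(a_{i-1}))\bigr)$ is at most one-dimensional, and is nonzero precisely when $\tfrac{a_{i-1}}{2},\tfrac{m}{2},\tfrac{a_i}{2}$ satisfy the triangle condition; the explicit realization is \eqref{eq.actionV(m)}. This settles lengths $1$ and $2$: length $1$ is $V(a)=Z(a,0)$, and length $2$ reduces to choosing a nonzero scalar for $\phi_{1,0}$, producing $E(a,b)$ under the stated magnitude and parity constraints.

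For $\ell\ge 2$ the abelianness of $\mathfrak{r}$ imposes $e_se_t=e_te_s$ on $V$, and reading this off in the $V_{i-2}$-component of $e_se_tv$ for $v\in V_i$ yields
\[
\phi_{i-1,i-2}(e_s)\,\phi_{i,i-1}(e_t)=\phi_{i-1,i-2}(e_t)\,\phi_{i,i-1}(e_s),
\]
so that the induced map $V(m)\otimes V(m)\to\text{Hom}(V_i,V_{i-2})$ must vanish on the antisymmetric part $\Lambda^2 V(m)\simeq V(2m-2)\oplus V(2m-6)\oplus\cdots$. Since both $\phi_{i,i-1}$ and $\phi_{i-1,i-2}$ are essentially unique, this translates into explicit Racah-type identities on Clebsch-Gordan coefficients. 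A favourable accident saves the $Z$-family: when the sequence $a_i$ is strictly monotone with steps $\pm m$, every $\sl(2)$-isotype of $\Lambda^2 V(m)$ lies outside the range $[|a_i-a_{i-2}|,a_i+a_{i-2}]$ that controls $\text{Hom}(V_i,V_{i-2})$, so the constraint is vacuous. Using the explicit evaluations \eqref{eq.extremoCG0}--\eqref{eq.extremoCG}, I would show that, up to duality, this is the only generic solution, yielding $Z(\alpha,\ell)$ and $Z(\alpha,\ell)^*$.

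The main obstacle is isolating the small-length exceptions, where a ``turn-around'' in the sequence $a_{i-2},a_{i-1},a_i$ is allowed because the offending antisymmetric isotypes happen to fall outside the Hom-range, or because the Racah coefficient vanishes for purely arithmetic reasons. A careful case analysis in length $3$ should yield $E_3(c)$ with $c\equiv 2m\pmod 4$, the extra mod $4$ condition emerging from parity tracking in \eqref{eq.extremoCG0}--\eqref{eq.extremoCG}. In length $4$ the new ingredient is the $V_3\to V_0$ symmetry condition, which genuinely couples the skip maps $\phi_{3,1}$ and $\phi_{2,0}$; a one-parameter family $E_4(t)$ survives precisely when $m\equiv 0\pmod 4$. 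Finally, iterating the three-step symmetry condition one layer further should rule out any non-$Z$-type configurations for $\ell\ge 4$, completing the classification.
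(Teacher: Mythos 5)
The theorem in question is not proved in the present paper at all: the authors explicitly quote it as \cite[Thm~10.1]{CS\_JofAlg} and supply no argument. So there is no ``paper's own proof'' against which to compare your attempt; what I can do is assess whether your sketch could plausibly be turned into a proof of the cited result.

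Your framework --- reconstruct the module from its socle decomposition as a tuple of $\sl(2)$-maps $\phi_{i,j}\colon V(m)\to\mathrm{Hom}(V_i,V_j)$ with $j<i$, note that each adjacent space $\mathrm{Hom}_{\sl(2)}(V(m),\mathrm{Hom}(V(a_i),V(a_{i-1})))$ is one-dimensional exactly under the triangle condition, and impose commutativity of $\mathfrak r$ as the vanishing of the induced map $\Lambda^2V(m)\to\mathrm{Hom}(V_i,V_{i-2})$ --- is the natural one and does correctly account for the $Z$-family: with consecutive differences all equal to $\pm m$ in a fixed direction, $|a_i-a_{i-2}|=2m$, so the isotypes $V(2m-2),V(2m-6),\dots$ of $\Lambda^2V(m)$ simply do not occur in $\mathrm{Hom}(V_i,V_{i-2})$ and the constraint is vacuous. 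The parity heuristic for $E_3(c)$ is also sound: $\mathrm{Hom}(V(c),V(0))\simeq V(c)$ avoids $\Lambda^2V(m)$ exactly when $c\equiv 2m\pmod 4$.

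Nevertheless there are genuine gaps. First, the citation of Lemma~\ref{lemma:soc_series} is misplaced: that lemma goes in the opposite direction (from a filtration satisfying $\mathfrak r V_k\subset V_{k-1}$ to a statement about the socle series); the fact that $\mathfrak r$ maps $V_i$ into $V_1\oplus\cdots\oplus V_{i-1}$ is the remark following \eqref{eq.soc_decomp}, itself resting on Lemma~\ref{lemma:soc}. Your justification that uniseriality forces each $\phi_{i,i-1}\neq 0$ (``otherwise $V_i\oplus\cdots$ splits off'') is also not right as stated --- because of skip maps, $V_i\oplus V_{i+1}\oplus\cdots$ need not be a submodule when $\phi_{i,i-1}=0$; the correct argument is that $\phi_{i,i-1}=0$ produces two incomparable submodules (e.g.\ the ones generated by $V_{i-1}$ and by $V_i$), contradicting uniseriality. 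Second, and more substantially, everything that makes the classification nontrivial is deferred: you never argue that a monotone sequence must have steps of exactly $m$ (the triangle condition only forces $|a_i-a_{i-1}|\le m$), you never rule out ``turn-arounds'' such as $(a,a+m,a)$ for $a>0$ in length three, you do not evaluate the relevant Racah/$6j$-type quantities whose nonvanishing kills those configurations, and the length-$4$ analysis with the skip map $\phi_{3,1}$ and the mod-$4$ condition for $E_4(t)$ is entirely schematic. Finally, the isomorphism count (why rescalings of the layers absorb the adjacent $\phi$'s but leave the skip-map scalar $t$ in $E_4(t)$ as a genuine parameter) is not addressed at all. Without these computations the sketch does not yet constitute a proof.
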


\section{The socle of the tensor product of two uniserial $\g_m$-modules}

\subsection{General considerations}\label{sec:Gen_cons}

Given two $\g_m$-modules $V$ and $W$, it is clear that 
$\text{soc}(V)\otimes\, \text{soc}(W)\subset \text{soc}(V\otimes W)$.
Therefore, if $V$ and $W$ are uniserial $\g_m$-modules 
with socle decomposition (see \eqref{eq.soc_decomp} and the comments below it)
\begin{align*}
V &=
V(a_0)\oplus V(a_1)\oplus \hdots \oplus V(a_\ell), \\
W &=
V(b_0)\oplus V(b_1)\oplus \hdots \oplus V(b_{\ell'}) 
\end{align*}
($V(a_0)=\text{soc}(V)$, $V(b_0)=\text{soc}(W)$),
 we have
\[
V(a_0)\otimes V(b_0)\subset \text{soc}(V\otimes W).
\]

For convenience we assume $V(a_{i})=V(b_{j})=0$ for $i,j<0$.
We know from Theorem \ref{thm.CS_Classification}
that 
\[
\mathfrak{r}V(a_i)\subset V(a_{i-1})\oplus V(a_{i-2})\quad
\text{and}
\quad 
\mathfrak{r}V(b_j)\subset V(b_{j-1})\oplus V(b_{j-2})
\]
for all $i\le \ell$, $j\le\ell'$ (in fact
we know that 
 $\mathfrak{r}V(a_i)\subset V(a_{i-1})$
and $\mathfrak{r}V(b_j)\subset V(b_{j-1})$ 
except for cases of uniserials of type $E_4$).
This implies that 
\begin{equation}\label{eq:filtration}
\mathfrak{r}\;\Big(
\bigoplus_{i+j=t} 
V(a_i)\otimes V(b_j)
\Big)\subset 
\bigoplus_{i+j<t} 
V(a_i)\otimes V(b_j).
\end{equation}
Moreover, we point out for future use that 
if neither $V$ nor $W$ is of type $E_4$ then 
\begin{equation}\label{eq:r in type Z}
\mathfrak{r}\,\big(V(a_i)\otimes V(b_{j})\big)
\subset V(a_{i-1})\otimes V(b_{j})\;\oplus\; 
V(a_i)\otimes V(b_{j-1})
\end{equation}

Since $\mathfrak{r}=[\mathfrak{s},\mathfrak{r}]$,
it follows from Lemma \ref{lemma:soc} that
$\text{soc}(U)=U^\mathfrak{r}$ for any $\g_m$-module $U$.
Hence, it follows from \eqref{eq:filtration} that
\begin{align}
\text{soc}(V\otimes W)
&=
\bigoplus_{t=0}^{\ell+\ell'}
\Big(
\text{soc}(V\otimes W) \cap \bigoplus_{i+j=t} V(a_{i})\otimes V(b_j)\Big) \notag \\\label{eq:soc=r-invariants}
&=
\bigoplus_{t=0}^{\ell+\ell'}
\Big(\bigoplus_{i+j=t} V(a_{i})\otimes V(b_j)\Big)^{\mathfrak{r}}.
\end{align}
For $t=0,\dots,\ell+\ell'$, let us define 
\begin{equation*}%\label{eq:def-S_0}
S_t=S_t(V,W)=
\Big(\bigoplus_{i+j=t} V(a_{i})\otimes V(b_j)\Big)^{\mathfrak{r}}
\end{equation*}
so that $
\displaystyle{\text{soc}(V\otimes W)=\bigoplus_{t=0}^{\ell+\ell'}S_t}$.
It is clear that 
\begin{equation*}%\label{eq:tensor de soc}
S_0=\Big(V(a_{0})\otimes V(b_0)\Big)^{\mathfrak{r}}=
V(a_{0})\otimes V(b_0)=
\text{soc}(V)\otimes\text{soc}(W)
\end{equation*}
and hence 
\begin{equation}\label{eq:soc=socxsoc+S_0}
\text{soc}(V\otimes W) =
\text{soc}(V)\otimes\text{soc}(W)\;\oplus\; \bigoplus_{t=1}^{\ell+\ell'}S_t
\end{equation}
as  $\sl(2)$-modules. 
Hence, in order to obtain 
the $\sl(2)$-decomposition of 
$\text{soc}(V\otimes W)$ we need to find 
the highest weight vectors in $S_t$ (specially for $t\ge1$)
that are 
annihilated by $\mathfrak{r}$.

Given $v\in V(a_i)\otimes V(b_j)$ and $e_s\in \mathfrak{r}$, let 
\begin{equation}\label{eq.e_sv}
e_sv=(e_sv)_1 +(e_sv)_2 + (e_sv)_3
\end{equation}
where 
\begin{align*}
(e_sv)_1&\in V(a_{i-1}) \otimes V(b_j), \\[1mm]
(e_sv)_2&\in V(a_i)\otimes V(b_{j-1}), \\[1mm]
(e_sv)_3&\in V(a_{i-2}) \otimes V(b_j) \oplus V(a_i)\otimes V(b_{j-2})
\end{align*} 
(note that $(e_sv)_3=0$ if neither $V$ nor $W$ is of type $E_4$). 
It is clear that $(e_sv)_1=0$ if $i=0$ and $(e_sv)_2=0$ if $j=0$, the following lemma states a sort of  converse of this for highest weight vectors in 
$V(a_i)\otimes V(b_j)$.

\begin{lemma}\label{lemma.no nulo arriba e izquierda}
Let 
$V=V(a_0)\oplus \hdots \oplus V(a_\ell)$ and 
$W=V(b_0)\oplus \hdots \oplus V(b_{\ell'})$,
with $\ell,\ell'\ge1$, be the socle decomposition of 
two uniserial $\g_m$-modules. 
If $v_0\in V(a_{i_0})\otimes V(b_{j_0})$ is a highest weight vector then:
\begin{enumerate}[(i)]
\item $(e_sv_0)_1=0$ for all $s=0,\dots, m$ if and only if $i_0=0$.
\item $(e_sv_0)_2=0$ for all $s=0,\dots, m$ if and only if $j_0=0$.
\end{enumerate}
\end{lemma}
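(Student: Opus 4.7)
The ``if'' directions in both parts are immediate from the convention $V(a_{-1})=V(b_{-1})=0$, so I would concentrate on the converses; moreover (ii) follows from (i) by interchanging the roles of $V$ and $W$. To prove the converse of (i) I would in fact establish the slightly stronger statement that for any nonzero $v_0 \in V(a_{i_0})\otimes V(b_{j_0})$ with $i_0\geq 1$ there exists an $s$ with $(e_sv_0)_1\neq 0$ (the highest-weight hypothesis is never invoked). First I would fix a rank-minimal decomposition $v_0 = \sum_{k=1}^r v_k\otimes w_k$ so that $\{v_k\}\subset V(a_{i_0})$ and $\{w_k\}\subset V(b_{j_0})$ are each linearly independent. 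Expanding $e_sv_0$ by the Leibniz rule and reading off the summand $V(a_{i_0-1})\otimes V(b_{j_0})$ in the decomposition \eqref{eq.e_sv} produces
\[
(e_sv_0)_1 \;=\; \sum_{k=1}^{r}\pi(e_sv_k)\otimes w_k,
\]
where $\pi : V(a_{i_0-1})\oplus V(a_{i_0-2})\to V(a_{i_0-1})$ is the canonical projection (the second summand contributes only in the $E_4(t)$ case). By linear independence of the $w_k$, the vanishing of $(e_sv_0)_1$ for every $s$ is equivalent to every $v_k$ lying in the subspace $K = \{v\in V(a_{i_0}) : \pi(e_sv)=0 \text{ for all } s=0,\dots,m\}$.

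The crux is then to show $K=0$. The subspace $K$ is $\sl(2)$-stable: using $[x,e_s]\in\mathfrak{r}$ for $x\in\sl(2)$ together with the $\sl(2)$-equivariance of $\pi$, one checks that $\pi(e_s(xv))$ is a linear combination of values $\pi(e_{s'}v)$ and $x\,\pi(e_sv)$, all of which vanish on $K$. By irreducibility of $V(a_{i_0})$ as an $\sl(2)$-module, either $K=0$ or $K=V(a_{i_0})$; the second alternative would force the composition $\mathfrak{r}\otimes V(a_{i_0})\to V(a_{i_0-1})\oplus V(a_{i_0-2})\twoheadrightarrow V(a_{i_0-1})$ to vanish identically. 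I would rule this out by a direct inspection of Theorem \ref{thm.CS_Classification}: for every uniserial $\g_m$-module and every $i_0\geq 1$, the $V(a_{i_0-1})$-component of this composition is a nonzero multiple of the defining map \eqref{eq.actionV(m)}. Hence $K=0$, so some $v_k$ supplies an $s$ with $\pi(e_sv_k)\neq 0$, and linear independence of the $w_k$ promotes this to $(e_sv_0)_1\neq 0$. The only subtlety I anticipate lies in the $E_4(t)$ family, where one must confirm that the horizontal arrow into $V(a_{i_0-1})$ is not cancelled by the bent arrow into $V(a_{i_0-2})$; since both arrows are independently nonzero (with $t\ne 0$) in the definition of $E_4(t)$, this is a matter of careful bookkeeping rather than a genuine obstacle.
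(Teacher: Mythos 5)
Your argument is correct, and it takes a genuinely different (and in fact stronger) route than the paper. The paper proves the converse of (i) by using the highest-weight hypothesis directly: it writes $v_0=v_0^{a,b,c}$ out explicitly via Clebsch--Gordan coefficients, observes that the coefficient of $v_0^{a}\otimes v_{(a+b-c)/2}^{b}$ is nonzero, then applies the specific operator $e_s$ with $s=\tfrac{a_{i_0}-a_{i_0-1}+m}{2}$ and shows via a second explicit Clebsch--Gordan evaluation that the resulting term in $v_0^{a_{i_0-1}}\otimes v_{(a+b-c)/2}^{b}$ is nonzero, whence $(e_sv_0)_1\ne 0$ (the remaining terms cannot cancel it because they are supported on distinct basis vectors of $V(b_{j_0})$). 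Your argument instead avoids the highest-weight assumption entirely: a rank-minimal decomposition reduces the vanishing of $(e_sv_0)_1$ for all $s$ to every $v_k$ lying in the kernel $K$, and you then show $K$ is $\sl(2)$-stable (using $[\sl(2),\mathfrak r]\subset\mathfrak r$ and $\sl(2)$-equivariance of $\pi$) and invoke irreducibility of $V(a_{i_0})$ to force $K=0$, since $K=V(a_{i_0})$ would contradict the nonvanishing of the defining map \eqref{eq.actionV(m)} into $V(a_{i_0-1})$ in Theorem \ref{thm.CS_Classification}. This is structurally cleaner, requires no Clebsch--Gordan computation, and yields the slightly stronger statement for all nonzero $v_0$, not merely highest-weight vectors; the paper's proof, by contrast, dovetails with the computational machinery developed for Theorem \ref{thm:length_2}. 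One small correction to your closing remark on $E_4(t)$: the bent arrow can never ``cancel'' the horizontal one because they land in distinct summands $V(a_{i_0-2})$ and $V(a_{i_0-1})$; the only thing to check is that the projection $\pi$ onto $V(a_{i_0-1})$ is $\sl(2)$-equivariant (it is, being projection onto a direct summand of $\sl(2)$-modules) and that the horizontal arrow itself is nonzero (it is, by construction). The diagnosis that there is no genuine obstacle is correct, but the reason is not the one you gave.
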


\begin{proof}
By symmetry it suffices to prove (i). 
As we already mentioned, it is clear the ``if'' part and 
thus we will prove the ``only if'' part.
So let us assume $i_0>0$ and let us prove 
$(e_sv_0)_1\ne 0$ for some $s=0,\dots, m$.

If $c$ is the weight of $v_0$, we may assume 
that (see \eqref{eq.Vc_en_tensor} and \eqref{eq.extremoCG})
\begin{align*}
v_0
=v_0^{a,b,c}
&=\sum_{i+j=\frac{a+b-c}{2}} 
CG(\tfrac{a}{2},\tfrac{a}{2}-i;\,
\tfrac{b}{2},\tfrac{b}{2}-j
\,|\,\tfrac{c}{2},\tfrac{c}{2})\,
 v_i^{a}\otimes v_j^{b} \\
&=\sum_{i+j=\frac{a+b-c}{2}} 
(-1)^i
\sqrt{\frac{(a+b-2i-2j+1)!(i+j)!(a-i)!(b-j)!}
{(a+b-i-j+1)!(a-i-j)!(b-i-j)!\,i!\,j!}}\,
 v_i^{a}\otimes v_j^{b} \\
&=
\sqrt{\frac{(c+1)!\;a!}
{(\frac{a+b+c}{2}+1)!\;(\frac{a-b+c}{2})!}}\,\,
 v_0^{a}\otimes v_{\frac{a+b-c}{2}}^{b} 
 \quad + \quad
 \sum_{j<\frac{a+b-c}{2}} q_j\;
 v_{\frac{a+b-c}{2}-j}^{a}\otimes v_j^{b} 
\end{align*}
where $a=a_{i_0}$ and $b=b_{j_0}$
and $q_j\in\F$ is some scalar.
Now, for $s=0,\dots, m$, we have
\begin{multline*}
(e_sv_0)_1=\sqrt{\frac{(c+1)!\;(a_{i_0})!}
{(\frac{a_{i_0}+b_{j_0}+c}{2}+1)!\;(\frac{a_{i_0}-b_{j_0}+c}{2})!}}\,\,
 e_sv_0^{a_{i_0}}\otimes v_{\frac{a_{i_0}+b_{j_0}-c}{2}}^{b_{j_0}} 
 \quad \\ + \quad
 \sum_{j<\frac{a_{i_0}+b_{j_0}-c}{2}} q_j\;
 e_s v_{\frac{a_{i_0}+b_{j_0}-c}{2}-j}^{a_{i_0}}\otimes v_j^{b_{j_0}} .
\end{multline*}
We know from \eqref{eq.actionV(m)Bis} 
(see also \eqref{eq.simmetryCG} and \eqref{eq.extremoCG}) 
that,
if $i_0>0$ and $s=\frac{a_{i_0}-a_{i_0-1}+m}{2}$ then 
\begin{align*}
e_s\, v_0^{a} 
& =
CG(\tfrac{a_{i_0-1}}{2},\tfrac{a_{i_0-1}}{2};\,\tfrac{a_{i_0}}{2},-\tfrac{a_{i_0}}{2}
\,|\,\tfrac{m}{2},\tfrac{m}{2}-s)\,\,v_{0}^{a_{i_0-1}}\\[2mm]
& =
\sqrt{\frac{(m+1)!\;(a_{i_0})!\;(a_{i_0-1})!}
{\big(\tfrac{a_{i_0}+a_{i_0-1}+m}{2}+1\big)!\;\big(\tfrac{a_{i_0}+a_{i_0-1}-m}{2}\big)!}}
\,\,v_{0}^{a_{i_0-1}} \ne 0.
\end{align*}
This implies $(e_sv_0)_1\ne 0$. 
\end{proof}

\begin{proposition}\label{prop:Zocalo_t}
Let 
$V=V(a_0)\oplus \hdots \oplus V(a_\ell)$ and 
$W=V(b_0)\oplus \hdots \oplus V(b_{\ell'})$,
with $\ell,\ell'\ge1$, 
be the socle decomposition of 
two uniserial $\g_m$-modules. 
If $t>\min\{ \ell,\ell'\}$ then
\[
S_t=\Big(\bigoplus_{i+j=t} V(a_{i})\otimes V(b_j)\Big)^{\mathfrak{r}} =0.
\]
If $0<t\le\min\{ \ell,\ell'\}$ and 
$\mu$ is a highest weight in 
$S_t$
then $\mu$ must be a highest weight
in all the summands $V(a_i)\otimes V(b_{t-i})$, $i=0,\dots,t$,
and, in this case, 
its weight space is 1-dimensional and generated by a linear combination 
\[
\sum_{i=0}^t q_{i}\,v_0^{a_i,b_{t-i},\mu}
\]
with $q_{i}\ne0$ for all $i=0,\dots,t$.
\end{proposition}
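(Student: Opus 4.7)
The plan is to analyze a nonzero highest weight vector $v\in S_t$ of weight $\mu$ by decomposing it along the socle layers of $V\otimes W$. Writing $v=\sum_{i+j=t}v_{ij}$ with $v_{ij}\in V(a_i)\otimes V(b_j)$, the fact that $e\,v=0$ together with the $\sl(2)$-stability of this decomposition forces each $v_{ij}$ to be a highest weight vector of weight $\mu$ (possibly zero). Since the $\mu$-weight highest weight space of $V(a_i)\otimes V(b_j)$ is at most one-dimensional by Clebsch-Gordan, I can write $v_{ij}=q_{ij}\,v_0^{a_i,b_j,\mu}$ for scalars $q_{ij}$.

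Next, I would read off the relations imposed by $\mathfrak{r}\,v=0$, i.e.\ by $e_s v=0$ for $s=0,\dots,m$. Writing $e_sv_{ij}=(e_sv_{ij})_1+(e_sv_{ij})_2+(e_sv_{ij})_3$ as in \eqref{eq.e_sv}, the $E_4$-type piece $(\cdot)_3$ lands in degree $t-2$ and hence cannot cancel the degree-$(t-1)$ terms. Consequently, restricting the vanishing of $e_sv$ to each summand $V(a_{k-1})\otimes V(b_{t-k})$ gives, for every $k\in\{1,\dots,t\}$ and every $s$,
\[
q_{k,t-k}\,\bigl(e_s\,v_0^{a_k,b_{t-k},\mu}\bigr)_1+q_{k-1,t-k+1}\,\bigl(e_s\,v_0^{a_{k-1},b_{t-k+1},\mu}\bigr)_2=0.
\]

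To prove part~(1) together with the nonvanishing of the two boundary components, let $i_{\min}$ be the smallest index with $v_{i_{\min},t-i_{\min}}\neq0$. Setting $k=i_{\min}$ above kills the second term by minimality, so $(e_s\,v_0^{a_{i_{\min}},b_{t-i_{\min}},\mu})_1=0$ for every $s$; Lemma~\ref{lemma.no nulo arriba e izquierda}(i) then forces $i_{\min}=0$, which requires $t\leq\ell'$. Scanning from the other end produces, by symmetry, $v_{t,0}\neq 0$ and $t\leq\ell$. Therefore $S_t=0$ whenever $t>\min\{\ell,\ell'\}$, and in the range $0<t\leq\min\{\ell,\ell'\}$ we already know $v_{0,t}\neq 0$ and $v_{t,0}\neq 0$.

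For the remaining claims, I would suppose an intermediate $v_{i_0,t-i_0}=0$ and pick the smallest such $i_0$; then $v_{i_0-1,t-i_0+1}\neq 0$, and the identity above in $V(a_{i_0-1})\otimes V(b_{t-i_0})$ collapses to $(e_s\,v_0^{a_{i_0-1},b_{t-i_0+1},\mu})_2=0$ for every $s$. Since $t-i_0+1\geq 1$, Lemma~\ref{lemma.no nulo arriba e izquierda}(ii) gives a contradiction; hence every $q_i:=q_{i,t-i}$ is nonzero and $\mu$ is automatically a highest weight of each $V(a_i)\otimes V(b_{t-i})$. Finally, if $v,v'$ are two nonzero vectors in the $\mu$-weight space of $S_t$ with $(0,t)$-coefficients $q_0,q_0'$, then $q_0'\,v-q_0\,v'$ still lies in $S_t$ but has zero $(0,t)$-coefficient, so the nonvanishing just established forces it to be zero and yields $v\propto v'$. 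The main obstacle is checking that the $E_4$-type contributions $(e_sv)_3$ really are orthogonal to the degree-$(t-1)$ relations used here; once that observation is secured the whole argument becomes a clean bookkeeping built on top of Lemma~\ref{lemma.no nulo arriba e izquierda}.
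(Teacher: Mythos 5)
Your proof is correct and follows essentially the same route as the paper's: decompose the putative highest weight vector along the $\sl(2)$-bins $V(a_i)\otimes V(b_j)$ with $i+j=t$, project the condition $e_s v=0$ onto the degree-$(t-1)$ layer (so the $E_4$-type terms $(\cdot)_3$ drop out, as you note), and then scan the resulting recurrence using Lemma~\ref{lemma.no nulo arriba e izquierda} to force all coefficients nonzero, from which both $t\le\min\{\ell,\ell'\}$ and the $1$-dimensionality follow. The only cosmetic difference is that the paper packages this as a claim about the index set $I_t^\mu$ being exactly $\{(0,t),\dots,(t,0)\}$, whereas you phrase it via a minimal index and a ``first zero coefficient'' argument; these are the same inductive scan.
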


\begin{proof}
We fix $t> 0$ and we assume that there is a non-zero
\[
u=\sum_{i+j=t} u_{i,j}
\in \Big(\bigoplus_{i+j=t} V(a_{i})\otimes V(b_j)\Big)^{\mathfrak{r}}
\]
that is a  highest weight vector of weight $\mu$.
Since $V(a_{i})\otimes V(b_j)$ is an $\mathfrak{sl}(2)$-submodule, it follows that $u_{i,j}$ is either zero or a highest weight vector of weight $\mu$. 
Let
\[
 I_t^\mu=\{(i,j): 0\le i\le  \ell,\; 0\le j \le\ell',\;i+j=t \text{ and } u_{i,j}\ne 0\}.
\]
Since $u\ne0$, it follows that 
$I_t^\mu\ne\emptyset$ and 
\[
 u=\sum_{(i,j)\in I_t^\mu}q_{i,j}\,v_0^{a_i,b_j,\mu}
\]
for certain non-zero scalars $0\ne q_{i,j}\in\F$. 
We will show that  
$t\le\min\{ \ell,\ell'\}$ and
\begin{equation}\label{eq:It}
I_t^\mu=\{(0,t),(1,t-1),\dots,(t,0)\}.
\end{equation}
Since $u$ is $\mathfrak{r}$-invariant, we have
\begin{equation}\label{eq:linear_system}
0 =e_s u =\sum_{(i,j)\in I_t^\mu}q_{i,j} \big(
(e_s v_0^{a_i,b_j,\mu})_1  +
(e_s v_0^{a_i,b_j,\mu})_2  +
(e_s v_0^{a_i,b_j,\mu})_3  \big)
\end{equation}
 for all $0\leq s\leq m$ (see \eqref{eq.e_sv}). 
 
Since we know that 
$(e_s v_0^{a_i,b_j,\mu})_1\in V(a_{i-1})\otimes V(b_{i})$,  
$(e_s v_0^{a_i,b_j,\mu})_2 \in V(a_{i})\otimes V(b_{i-1})$,
and 
$(e_sv_0^{a_i,b_j,\mu})_3\in V(a_{i-2}) \otimes V(b_j) \oplus V(a_i)\otimes V(b_{j-2})$, 
 \eqref{eq:linear_system} yields a linear system whose unknowns are the $q_{i,j}$'s and the equations are
 \begin{align}\label{eq.system_2}
 q_{i,j} (e_s v_0^{a_i,b_j,\mu})_1 +  q_{i-1,j+1} (e_s v_0^{a_{i-1},b_{j+1},\mu})_2&=0,\quad \text{if $(i,j), (i-1,j+1)\in I_t^\mu$}; \\ 
 \label{eq.system_1}
 q_{i,j} (e_s v_0^{a_i,b_j,\mu})_1 &=0,\quad \text{if $(i,j)\in I_t^\mu$, $(i-1,j+1)\not\in I_t^\mu$};\\  
 \label{eq.system_1bis}
 q_{i,j} (e_s v_0^{a_i,b_j,\mu})_2 &=0,\quad \text{if $(i,j)\in I_t^\mu$, $(i+1,j-1)\not\in I_t^\mu$}
\end{align}
for each $s=0,\dots,m$. 
Suppose, if possible, that there is 
$(i_0,j_0)\in I_t^\mu$, $i_0>0$, such that 
$(i_0-1,j_0+1)\not\in I_t^\mu$ (this would happen if $t>\ell'$). Then \eqref{eq.system_1} implies 
$(e_s v_0^{a_i,b_j,\mu})_1 =0$ for all  $s=0,\dots,m$, 
which contradicts Lemma \ref{lemma.no nulo arriba e izquierda}. 
This proves that $t\le \ell'$ and $(0,t)\in I_t^\mu$. 

Similarly, it can be shown that if 
 $(i_0,j_0)\in I_t^\mu$, then either $j_0=0$ or 
$(i_0+1,j_0-1)\in I_t^\mu$. 
This shows that $t\le \ell$ and \eqref{eq:It}.

Finally, it follows from 
Lemma \ref{lemma.no nulo arriba e izquierda}
that 
the linear system given by \eqref{eq.system_2}, \eqref{eq.system_1} and \eqref{eq.system_1bis} 
has at most a 1-dimensional solution space. 
\end{proof}

\subsection{The socle of the tensor product of $\g_m$-modules of type $Z$}
Given two uniserial $\g_m$-modules $V_1$ and $V_2$, 
of length 2, 
with socle decomposition 
$V_1=V(a)\oplus V(b)$ and 
$V_2=V(c)\oplus V(d)$,
we know from the previous section 
that 
\[
\text{soc}(V_1 \otimes V_2)=V(a)\otimes V(c)\; \oplus S_1
\]
with 
\[
S_1=\text{soc}(V_1 \otimes V_2)\cap 
\Big(V(a)\otimes V(d)\;\oplus \;
V(b)\otimes V(d)\Big).
\]
The following theorem describes $S_1$ for
uniserials of length 2 of type $Z$ (see \S\ref{wdos}). 

 \begin{theorem}\label{thm:length_2}
 Let 
$V_1=V(a)\oplus V(b)$ and 
$V_2=V(c)\oplus V(d)$ be the socle decomposition of 
two uniserial $\g_m$-modules
of type $Z$.
Then the following table describes $S_1$:
\begin{center}

\noindent
\begin{tabular}{c|c|c|}
${}_{\displaystyle V_1}\backslash 
{\displaystyle V_2}$ & 
$\begin{array}{l}
Z(c,1) \\[1mm] 
\simeq V(c)\oplus V(c+m)
\end{array}$   & 
$\begin{array}{l}
Z(d,1)^* \\[1mm] 
\simeq V(d+m)\oplus V(d)
\end{array}$ 
\rule[-3mm]{0mm}{8mm}\\
\hline
$\begin{array}{l}
Z(a,1)\\[1mm] 
\simeq V(a)\oplus V(a+m)
\end{array}$  & 
$S_1\simeq V(a+d); $ & 
$S_1\simeq\begin{cases} 
V(d-a),& \text{if $a\le d$;} \\ 
0,& \text{if $a> d$;} \end{cases}$ 
\rule[-6mm]{0mm}{14mm}\\
\hline
$\begin{array}{l}
Z(b,1)^*\\[1mm] 
\simeq V(b+m)\oplus V(b)
\end{array}$  & 
$S_1\simeq\begin{cases} 
V(b-c),& \text{if $b\ge c$;} \\ 
0,& \text{if $b< c$;} \end{cases}$  & 
$S_1=0$.
\rule[-6mm]{0mm}{14mm}\\
\hline
\end{tabular}
\end{center}
\medskip

\noindent
All the isomorphisms in the table are as $\sl(2)$-modules.
A highest weight vector is 
\[
u_0=\sqrt{d+1}\;v_0^{a,d,\mu} - 
\sqrt{b+1}\;v_0^{b,c,\mu} 
\]
in the entries (1,1) and (1,2) of the table, 
with $\mu=a+d=b+c$ and $\mu=d-a=c-b$ respectively, 
and 
\[
u_0=\sqrt{d+1}\;v_0^{a,d,\mu}\;
-(-1)^m
\sqrt{b+1}\;   v_0^{b,c,\mu}
\]
in the entries (2,1) of the table
with $\mu=a-d=b-c$. 
 \end{theorem}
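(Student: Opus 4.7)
\medskip

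\textbf{Proof proposal.}
The strategy is to use Proposition~\ref{prop:Zocalo_t} to reduce the problem, then exploit $\sl(2)$-equivariance to decompose the invariance condition via Clebsch--Gordan, and finally evaluate the resulting scalar equations explicitly.

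By Proposition~\ref{prop:Zocalo_t}, any highest weight vector in $S_1$ of weight $\mu$ has the form $u_0=q_{0,1}\,v_0^{a,d,\mu}+q_{1,0}\,v_0^{b,c,\mu}$ with $q_{0,1},q_{1,0}\neq 0$, and the weight space is at most one-dimensional. Since both $V_1$ and $V_2$ are of type $Z$ (never of type $E_4$), inclusion \eqref{eq:r in type Z} forces $\mathfrak{r}u_0$ to lie in $V(a)\otimes V(c)$. I would then exploit the $\sl(2)$-equivariant map
\[
\Phi\colon \mathfrak{r}\otimes V(\mu)\longrightarrow V(a)\otimes V(c),\qquad r\otimes u\mapsto r\cdot u,
\]
obtained by extending $\sl(2)$-equivariantly from the identification of $V(\mu)$ with the $\sl(2)$-submodule generated by $u_0$. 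By the Clebsch--Gordan decomposition $\mathfrak{r}\otimes V(\mu)=\bigoplus_{k=0}^{\min(m,\mu)}V(m+\mu-2k)$ combined with Schur's lemma, the requirement $\Phi=0$ is equivalent to a single scalar equation in $(q_{0,1},q_{1,0})$ for each summand $V(m+\mu-2k)$ that is also present in the Clebsch--Gordan decomposition of $V(a)\otimes V(c)$.

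For each entry of the table, I expect to see that the $\mu$ listed is the unique value for which exactly one common summand arises, giving a one-dimensional solution space with the claimed ratio. In entry $(1,1)$, $\mu=a+d$ is the top weight of both $V(a)\otimes V(d)$ and $V(b)\otimes V(c)$, so $v_0^{a,d,a+d}=v_0^a\otimes v_0^d$ and $v_0^{b,c,b+c}=v_0^b\otimes v_0^c$; the only overlap summand is $V(a+c)$, only $e_m$ contributes to that weight, and using \eqref{eq.actionV(m)Bis} and \eqref{eq.extremoCG1} one obtains
$e_m v_0^d=\sqrt{(m+1)/(d+1)}\,v_0^c$ and $e_m v_0^b=\sqrt{(m+1)/(b+1)}\,v_0^a$,
so $e_m u_0=0$ collapses to $q_{0,1}\sqrt{b+1}+q_{1,0}\sqrt{d+1}=0$, reproducing the stated formula. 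Entries $(1,2)$ and $(2,1)$ are handled by the same scheme with $\mu=|a-d|=|b-c|$ at the \emph{bottom} of the tensor products, so $v_0^{a,d,\mu}$ and $v_0^{b,c,\mu}$ are full Clebsch--Gordan sums via \eqref{eq.extremoCG}; the sign $(-1)^j$ in \eqref{eq.actionV(m)} inherited from the dual structure of $Z^*$ is what produces the extra $(-1)^m$ in the $(2,1)$ formula. For entry $(2,2)$, both factors are of type $Z^*$ and one checks directly that at least two overlap summands arise for every admissible $\mu$, forcing $q_{0,1}=q_{1,0}=0$ and hence $S_1=0$.

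The main obstacle is the complementary task: for every $\mu$ other than the one listed in entries $(1,1)$, $(1,2)$, $(2,1)$, one must show that the resulting multiple overlap equations are linearly independent so that the system admits only the trivial solution. Equivalently, one must prove that certain ratios of Clebsch--Gordan coefficients across adjacent isotypic summands of $V(a)\otimes V(c)$ do not coincide. This is precisely where the symmetry identities \eqref{eq:swap}--\eqref{eq.simmetryCG} together with the closed-form values \eqref{eq.extremoCG0}--\eqref{eq.extremoCG} become indispensable, and I expect the bulk of the computation to lie here, consistent with the ``lengthy and precise computations'' anticipated in the introduction.
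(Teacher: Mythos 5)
Your framework is a legitimate, somewhat different organization of what the paper does: both start from Proposition~\ref{prop:Zocalo_t}, but you repackage the condition $\mathfrak{r}\,u_0=0$ as the vanishing of the $\sl(2)$-equivariant map $\Phi\colon\mathfrak{r}\otimes V(\mu)\to V(a)\otimes V(c)$ and then invoke Schur's lemma to reduce to one scalar equation per isotypic component common to $V(m)\otimes V(\mu)$ and $V(a)\otimes V(c)$. This is conceptually tidy; Lemma~\ref{lemma.no nulo arriba e izquierda} immediately gives at least one common component (so the solution space is at most one-dimensional, recovering part of Proposition~\ref{prop:Zocalo_t}), and your explicit verification of entry $(1,1)$ at $\mu=a+d$ is correct and matches the paper's value of the highest weight vector.

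The gap is that the proposal stops precisely where the paper's work begins. The real burden of the theorem is the negative half: proving $S_1=0$ for every admissible $\mu$ other than the one claimed in entries $(1,1)$, $(1,2)$, $(2,1)$, and for all $\mu$ in entry $(2,2)$; the paper devotes several pages of explicit Clebsch--Gordan computation to exactly this. You only announce that one ``must show that the resulting multiple overlap equations are linearly independent'' without doing so, and in the $(2,2)$ discussion the jump from ``at least two overlap summands arise'' to ``forcing $q_{0,1}=q_{1,0}=0$'' is a non sequitur: a $2\times 2$ homogeneous system can still have a non-trivial kernel, so linear independence of the two Schur-scalar conditions must actually be established, not inferred from a count. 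Moreover, carrying out your plan literally would require computing all the Schur scalars, essentially $6j$-symbol evaluations, whereas the paper sidesteps the isotypic decomposition entirely: it fixes a single well-chosen $s$ ($s=m$ in case $(1,1)$, $s=1$ in the others) and shows $e_s v_0^{a,d,\mu}$ and $e_s v_0^{b,c,\mu}$ are independent by comparing their supports or the ratio of their first two coefficients. One further inaccuracy: the $(-1)^m$ in entry $(2,1)$ does not originate from the $(-1)^j$ in \eqref{eq.actionV(m)} (that sign is present for every $E(a,b)$, dual or not); the paper derives $(2,1)$ from $(1,2)$ by transposing tensor factors, and the swap symmetry \eqref{eq:swap} is what produces the $(-1)^m$.
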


It would be very interesting for us to extend this theorem 
to any pair of uniserials of length 2. 
We have the following conjecture:

\begin{conj}\label{conj:length2}
 Let 
$V_1=V(a)\oplus V(b)$ and 
$V_2=V(c)\oplus V(d)$  be the socle decomposition of 
two uniserial $\g_m$-modules,
as in Theorem \ref{thm.CS_Classification}, 
and assume that $a<c$, or
$a=c$ and $b\le d$. 
Then $S_1=0$ except in the following cases. 
\begin{enumerate}[\hspace{3mm}]
\item[\tiny$\bullet$]
Case 1: $[a,b]=[0,m]$. Here $S_1\simeq V(d)$. 
\item[\tiny$\bullet$] Cases 2: Here $a>0$.
\begin{enumerate}[\hspace{3mm}]
\item[--] Case 2.1: $a+b=c+d=m$ with $d-a=b-c\ge0$. 
Here $S_1\simeq V(d-a)$.
\item[--] Case 2.2: $b-a=d-c=m$. Here $S_1\simeq V(d+a)$.
\item[--] Case 2.3: $b-a=c-d=m$ with $d-a=c-b\ge0$. 
Here $S_1\simeq V(d-a)$.
 \end{enumerate}
\item[\tiny$\bullet$] Case 3: $[c,d]=[b,a]$. Here $S_1\simeq V(0)$.
 \end{enumerate}
 \end{conj}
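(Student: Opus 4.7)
The plan is to apply Proposition~\ref{prop:Zocalo_t} to each of the four entries of the table. Since both $V_1$ and $V_2$ have composition length $2$, that proposition gives $S_1\subset V(a)\otimes V(d)\;\oplus\;V(b)\otimes V(c)$, and any highest weight vector of weight $\mu$ in $S_1$ is of the form
\[
u = q_1\, v_0^{a,d,\mu} + q_2\, v_0^{b,c,\mu},
\]
where $\mu$ is a common highest weight of $V(a)\otimes V(d)$ and $V(b)\otimes V(c)$. The $\mathfrak{r}$-invariance of $u$ reduces, via \eqref{eq:r in type Z} and the fact that $V(a)$ and $V(c)$ are the socles of $V_1$ and $V_2$, to the vector equations
\[
q_1\,(e_s v_0^{a,d,\mu})_2 + q_2\,(e_s v_0^{b,c,\mu})_1 = 0 \quad \text{in } V(a)\otimes V(c),\qquad s=0,\dots,m.
\]

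Next, I would expand these two vectors using \eqref{eq.Vc_en_tensor} and then apply \eqref{eq.actionV(m)Bis} to the factor on which $\mathfrak{r}$ acts non-trivially. Since $e_s$ has $\sl(2)$-weight $m-2s$, the component $(e_s v_0^{a,d,\mu})_2$ lies in the $(\mu+m-2s)$-weight space of $V(a)\otimes V(c)$, and similarly for $(e_s v_0^{b,c,\mu})_1$. Hence distinct values of $s$ constrain disjoint weight subspaces, and each $s$ yields an independent scalar proportionality $q_1\, R_1(s,\mu) + q_2\, R_2(s,\mu)=0$, obtained by reading off the coefficient of a convenient basis vector $v_{i_0}^a\otimes v_{j_0(s)}^c$. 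A non-trivial solution therefore exists if and only if the ratio $R_1(s,\mu)/R_2(s,\mu)$ is independent of $s$, in which case $q_1/q_2$ determines the unique highest weight vector.

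The decisive computation is to evaluate $R_1(s,\mu)$ and $R_2(s,\mu)$ in closed form. Taking $i_0=0$ places both factors among the extremal families \eqref{eq.extremoCG0}-\eqref{eq.extremoCG11}, reducing their ratio to an explicit quotient of square-roots of factorials whose dependence on $s$ and $\mu$ can be read off directly. In entry $(1,1)$, where $b=a+m$ and $d=c+m$, only $\mu=a+d$ is a common highest weight, the ratio is automatically $s$-independent, and one recovers the coefficients $\sqrt{d+1},-\sqrt{b+1}$ of the theorem. In entries $(1,2)$ and $(2,1)$ several common $\mu$ are available, but the computation shows that $R_1/R_2$ is $s$-independent precisely at the extremal value $\mu=d-a$ (resp.\ $\mu=b-c$), which requires $a\le d$ (resp.\ $b\ge c$) for positivity; inserting this back into the ratio reproduces the stated coefficients. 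In entry $(2,2)$, the ratio genuinely depends on $s$ for every admissible $\mu$, forcing $S_1=0$.

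The main obstacle will be the bookkeeping required to verify the $s$-independence condition explicitly: although the individual factors $R_1(s,\mu)$ and $R_2(s,\mu)$ involve several shifted factorials, their ratio must be shown to collapse to an $s$-free expression exactly at the tabulated values of $\mu$ and to depend genuinely on $s$ otherwise. This rests on a non-trivial polynomial identity obtained after clearing the square roots, and the asymmetry between the four entries traces back to which of \eqref{eq.extremoCG1} and \eqref{eq.extremoCG11} governs each factor, together with the sign $(-1)^j$ in \eqref{eq.actionV(m)Bis}; the careful accounting of this sign is precisely what produces the factor $(-1)^m$ in the formula for $u_0$ in entry $(2,1)$.
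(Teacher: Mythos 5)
This statement is labeled a \emph{conjecture} in the paper, and the paper does not prove it: the introduction says explicitly ``In order to extend Theorem \ref{thm:length_2} to the exceptional uniserials it is necessary to work out these computations, but they become harder and so far we could only arrive to Conjecture \ref{conj:length2}.'' So there is no proof in the paper against which your attempt can be matched, and you should not expect that filling in your sketch amounts to reproducing an existing argument.

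More to the point, what you have sketched is in substance the proof of Theorem \ref{thm:length_2}, not of the conjecture. You organize the argument around ``the four entries of the table'' and phrase everything in terms of the pairings $Z(a,1)\otimes Z(c,1)$, $Z(a,1)\otimes Z(d,1)^*$, etc.; those are precisely the situations where $|b-a|=|d-c|=m$, i.e.\ both factors are of type $Z$. The conjecture, however, quantifies over \emph{all} length-2 uniserials from Theorem \ref{thm.CS_Classification}, which are the $E(a,b)$ with $a+b\equiv m\bmod 2$ and $|a-b|\le m\le a+b$. Cases~1, 2.1 and 3, and the residual claim that $S_1=0$ in all remaining configurations, involve general $E(a,b)$ modules where the $\mathfrak{r}$-action \eqref{eq.actionV(m)} is not governed by the extremal identities \eqref{eq.extremoCG1}--\eqref{eq.extremoCG11}, so the closed-form simplification you lean on is not available and your sketch says nothing about them. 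This is exactly the hard part the paper declines to carry out.

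There is also a logical gap even in the part you do address. You reduce the constraint $q_1(e_sv_0^{a,d,\mu})_2+q_2(e_sv_0^{b,c,\mu})_1=0$ to a single scalar equation per $s$ by ``reading off the coefficient of a convenient basis vector,'' and then assert a non-trivial solution exists \emph{if and only if} the resulting ratio is $s$-independent. That equivalence is false: the two vectors live in the weight-$(\mu+m-2s)$ subspace of $V(a)\otimes V(c)$, which is generically multidimensional, so for each fixed $s$ you must show the \emph{entire} vectors $(e_sv_0^{a,d,\mu})_2$ and $(e_sv_0^{b,c,\mu})_1$ are proportional with the same constant before you can conclude anything; a single coefficient gives only a necessary condition. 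The paper's proof of Theorem \ref{thm:length_2} is careful here --- it computes the full vectors $w^{a,d,\mu}_s$ and $w^{b,c,\mu}_s$ and, when showing $S_1=0$, exhibits linear independence either by comparing two adjacent coefficients within a single $s$ or by observing a mismatch in the support of the two vectors. If you want to turn your sketch into a proof of the conjecture, you would need both to repeat those full-vector computations for general $E(a,b)$ (where the coefficients no longer collapse to a simple quotient of factorials) and to handle the vector, not just scalar, proportionality condition.
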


Note that the entries (1,1) and (1,2) in the table of 
 Theorem \ref{thm:length_2}
 correspond to Cases 2.2 and 2.3 respectively,
 while the entry (2,1), with $b\ge c$, 
 is ruled out in the conjecture by the condition 
 $a\le c$.

%%%% \label{accion del radical sobre los duales} %%%%%
\begin{proof}[Proof of Theorem \ref{thm:length_2}]
Let $\mu$ be a possible highest weight in $S_1$.
We first point out some general considerations that will
be useful for all cases, and next we will 
work out the details of each case. 

We know from Proposition \ref{prop:Zocalo_t}
that $\mu$ must be highest weight in both 
$V(a)\otimes V(d)$ and $V(b)\otimes V(c)$, that is 
\begin{equation}\label{eq:cotas_mu}
|a-d|,|b-c|\le \mu \le a+d,b+c
\end{equation}
and $\mu\equiv a+d\equiv b+c\mod 2$.
We also know that $\mu$ is indeed
 highest weight in $S_1$ if and only if
 there is a linear combination 
\[
u_0=q_1v_0^{a,d,\mu}+q_2 v_0^{b,c,\mu},
\]
with $q_1,q_2\ne 0$, that is 
annihilated by $e_s$ for all 
$s=0,\dots,m$.
We now describe 
$e_sv_0^{a,d,\mu}$ and 
$e_sv_0^{b,c,\mu}$.

On the one hand we have (see \eqref{eq.Vc_en_tensor})
\begin{equation*}
v_0^{a,d,\mu}
=\sum_{i,j} 
CG(\tfrac{a}{2},\tfrac{a}{2}-i;\,\tfrac{d}{2},\tfrac{d}{2}-j
\,|\,\tfrac{\mu}{2},\tfrac{\mu}{2})\,
 v_i^a\otimes v_j^d 
\end{equation*}
and thus (see \eqref{eq.actionV(m)})
\begin{align}
e_sv_0^{a,d,\mu}
& =\sum_{i,j} 
CG(\tfrac{a}{2},\tfrac{a}{2}-i;\,\tfrac{d}{2},\tfrac{d}{2}-j
\,|\,\tfrac{\mu}{2},\tfrac{\mu}{2})\, v_{i}^a\otimes e_sv_{j}^d \notag \\
& =\sum_{i,j,k} (-1)^j
CG(\tfrac{a}{2},\tfrac{a}{2}-i;\,\tfrac{d}{2},\tfrac{d}{2}-j
\,|\,\tfrac{\mu}{2},\tfrac{\mu}{2})\notag \\
&\hspace{4cm}\times 
CG(\tfrac{c}{2},\tfrac{c}{2}-k;\,\tfrac{d}{2},-\tfrac{d}{2}+j
\,|\,\tfrac{m}{2},\tfrac{m}{2}-s)\,
v_{i}^a\otimes v_{k}^{c} \notag \\
& =\sum_{i,j,k} (-1)^k
CG(\tfrac{a}{2},\tfrac{a}{2}-i;\,\tfrac{d}{2},\tfrac{d}{2}-k
\,|\,\tfrac{\mu}{2},\tfrac{\mu}{2})\notag \\\label{eq:esvad}
&\hspace{4cm}\times 
CG(\tfrac{c}{2},\tfrac{c}{2}-j;\,\tfrac{d}{2},-\tfrac{d}{2}+k
\,|\,\tfrac{m}{2},\tfrac{m}{2}-s)\,
v_{i}^a\otimes v_{j}^{c}.
\end{align}
In this sum, if the coefficient of $v_{i}^a\otimes v_{j}^{c}$ is not zero then we must have
\begin{equation}\label{eq.Coef_no_nulo_ad}
\begin{split}
\frac{a}{2}-i+\frac{d}{2}-k & = \frac{\mu}{2}, \\
\frac{c}{2}-j-\frac{d}{2}+k & =\frac{m}{2}-s.
\end{split}
\end{equation}

On the other hand we have (see \eqref{eq.Vc_en_tensor})
\begin{equation*}
v_0^{b,c,\mu}
 =\sum_{i,j} 
CG(\tfrac{b}{2},\tfrac{b}{2}-i;\,\tfrac{c}{2},\tfrac{c}{2}-j
\,|\,\tfrac{\mu}{2},\tfrac{\mu}{2})\,
 v_i^b\otimes v_j^c. 
\end{equation*}
and thus (see \eqref{eq.actionV(m)})
\begin{align}
e_sv_0^{b,c,\mu}
& =\sum_{i,j} 
CG(\tfrac{b}{2},\tfrac{b}{2}-i;\,\tfrac{c}{2},\tfrac{c}{2}-j
\,|\,\tfrac{\mu}{2},\tfrac{\mu}{2})\,
 e_s v_i^b\otimes v_j^c. \notag \\
& =\sum_{i,j,k} (-1)^i
CG(\tfrac{b}{2},\tfrac{b}{2}-i;\,\tfrac{c}{2},\tfrac{c}{2}-j
\,|\,\tfrac{\mu}{2},\tfrac{\mu}{2})\notag \\
&\hspace{4cm}\times  
CG(\tfrac{a}{2},\tfrac{a}{2}-k;\,\tfrac{b}{2},-\tfrac{b}{2}+i
\,|\,\tfrac{m}{2},\tfrac{m}{2}-s)\,
v_k^a\otimes v_j^c \notag \\
& =\sum_{i,j,k} (-1)^k 
CG(\tfrac{b}{2},\tfrac{b}{2}-k;\,\tfrac{c}{2},\tfrac{c}{2}-j
\,|\,\tfrac{\mu}{2},\tfrac{\mu}{2})\notag \\\label{eq:esvbc}
&\hspace{4cm}\times  
CG(\tfrac{a}{2},\tfrac{a}{2}-i;\,\tfrac{b}{2},-\tfrac{b}{2}+k
\,|\,\tfrac{m}{2},\tfrac{m}{2}-s)\,
v_i^a\otimes v_j^c.
\end{align}
In this sum, if the coefficient of $v_{i}^a\otimes v_{j}^{c}$ is not zero then we must have
\begin{equation}\label{eq.Coef_no_nulo_bc}
\begin{split}
\frac{b}{2}-k+\frac{c}{2}-j & = \frac{\mu}{2}, \\
\frac{a}{2}-i-\frac{b}{2}+k & =\frac{m}{2}-s.
\end{split}
\end{equation}
Either \eqref{eq.Coef_no_nulo_ad} or  \eqref{eq.Coef_no_nulo_bc} imply
\begin{equation}\label{eq.imasj}
i+j=\frac{a+c-m-\mu}{2}+s,
\end{equation}
and recall that $0\le i\le a$ and $0\le j \le c$.

\bigskip

\noindent
\emph{The case $Z(a,1)\otimes Z(c,1)$}.
Here $b=a+m$, $d=c+m$ and \eqref{eq:cotas_mu} implies 
\begin{equation}\label{eq:cota_p_ZZ}
\mu=a+c+m-2p,\quad 0\le p\le \min\{a,c+m\}.
\end{equation}
It follows from  \eqref{eq.imasj}
that
\begin{equation}\label{eq:imasj_ZZ}
0\le i+j=p-m+s.
\end{equation}

First we prove that if $p=0$ then $\mu$ is indeed
a highest weight in $S_1$.
In this case, it follows from \eqref{eq:imasj_ZZ}
that 
\begin{equation*}%\label{eq.ceros_s<m}
e_sv_0^{a,d,\mu}= e_sv_0^{b,c,\mu}=0
\end{equation*}
for all $s=0,\dots,m-1$. 

For $s=m$, in the sums describing 
$e_mv_0^{a,d,\mu}$ and $e_mv_0^{b,c,\mu}$
we must have $i+j=0$, that is $i=j=0$, and thus
(see \eqref{eq:esvad} and \eqref{eq:esvbc})
\begin{align*}
e_mv_0^{a,d,\mu}
&=
CG(\tfrac{a}{2},\tfrac{a}{2};\,
\tfrac{c+m}{2},\tfrac{c+m}{2}
\,|\,\tfrac{\mu}{2},\tfrac{\mu}{2})\, 
CG(\tfrac{c}{2},\tfrac{c}{2};\,
\tfrac{c+m}{2},-\tfrac{c+m}{2}
\,|\,\tfrac{m}{2},-\tfrac{m}{2})\,
v_{0}^a\otimes v_{0}^{c} \\
&= \sqrt{\frac{m+1}{c+m+1}}\;v_{0}^a\otimes v_{0}^{c} 
\end{align*}
and 
\begin{align*}
e_mv_0^{b,c,\mu}
&=
CG(\tfrac{a+m}{2},\tfrac{a+m}{2};\,
\tfrac{c}{2},\tfrac{c}{2}
\,|\,\tfrac{\mu}{2},\tfrac{\mu}{2})\, 
CG(\tfrac{a}{2},\tfrac{a}{2};\,
\tfrac{a+m}{2},-\tfrac{a+m}{2}
\,|\,\tfrac{m}{2},-\tfrac{m}{2})\,
v_0^a\otimes v_0^c \\
&= \sqrt{\frac{m+1}{a+m+1}}\;
v_0^a\otimes v_0^c.
\end{align*}
This implies that 
\[
u_0=\sqrt{c+m+1}\;v_0^{a,d,\mu} - 
\sqrt{a+m+1}\;v_0^{b,c,\mu} 
\]
is, indeed, a highest weight vector, 
of weight $\mu=a+c+m$, in $S_1$. 

\medskip

We now prove that if $p\ge 1$ then $\mu=a+c+m-2p$ is not a highest weight in $S_1$.

Let us fix $s=m$. It follows from
\eqref{eq.Coef_no_nulo_ad} and \eqref{eq:imasj_ZZ} that 
\begin{equation*}
k = j= p-i
\end{equation*}
and (see \ref{eq:cota_p_ZZ})
\begin{align*}
e_mv_0^{a,d,\mu}
& =\sum_{i=0}^{\min\{p,a\}} (-1)^{p-i}
CG(\tfrac{a}{2},\tfrac{a}{2}-i;\,\tfrac{c+m}{2},\tfrac{c+m}{2}+i-p
\,|\,\tfrac{a+c+m}{2}-p,\tfrac{a+c+m}{2}-p)\, \\
&
\hspace{2cm}\times 
CG(\tfrac{c}{2},\tfrac{c}{2}+i-p;\,\tfrac{c+m}{2},-\tfrac{c+m}{2}+p-i
 \,|\,\tfrac{m}{2},-\tfrac{m}{2})\,
v_{i}^a\otimes v_{p-i}^{c} \\[5mm]
& =\sum_{i=0}^{\min\{p,a\}}  (-1)^{i}
\sqrt{\frac{(a+c+m-2p+1)!\; p!\;(a-i)!\;(c+m-p+i)!}
{(a+c+m-p+1)!\;(a-p)!\;(c+m-p)!\;i!\;(p-i)!}} \\
&
\hspace{4cm}\times 
\sqrt{\frac{(m+1)!\;c!\;(c+m+i-p)!}
{(c+m+1)!\;m!\;(c+i-p)!}}\;\;
v_{i}^a\otimes v_{p-i}^{c}.
\end{align*}
This is, up to a non-zero scalar, equal to
\[
w^{a,d,\mu}=
\sum_{i=0}^{\min\{p,a\}}  (-1)^{i}
\sqrt{\frac{(a-i)!\;(c+m+i-p)!^2}
{\;i!\;(p-i)!\;(c+i-p)!}}\;\;
v_{i}^a\otimes v_{p-i}^{c}.
\]

Similarly, it follows from
\eqref{eq.Coef_no_nulo_bc} and \eqref{eq:esvbc} that \begin{equation*}
k = i,\quad j= p-i
\end{equation*}
and (see \ref{eq:cota_p_ZZ})
\begin{align*}
e_mv_0^{b,c,\mu}
& =\sum_{i=0}^{\min\{p,a\}} (-1)^i \;
CG(\tfrac{a+m}{2},\tfrac{a+m}{2}-i;\,\tfrac{c}{2},\tfrac{c}{2}+i-p
\,|\,\tfrac{a+c-m}{2}-p,\tfrac{a+c-m}{2}-p) \\
&
\hspace{2cm}\times 
CG(\tfrac{a}{2},\tfrac{a}{2}-i;\,\tfrac{a+m}{2},-\tfrac{a+m}{2}+i
\,|\,\tfrac{m}{2},-\tfrac{m}{2})\,
v_i^a\otimes v_{p-i}^c \\[3mm]
& =\sum_{i=0}^{\min\{p,a\}}  (-1)^{i} 
\sqrt{\frac{(a+c+m-2p+1)!\;p!\;(a+m-i)!\;(c+i-p)!}
{(a+c+m-p+1)!\;(a+m-p)!\;(c-p)!\;i!\;(p-i)!}} \\
&
\hspace{4cm}\times 
\sqrt{\frac{(m+1)!\;a!\;(a+m-i)!}
{(a+m+1)!\;m!\;(a-i)!}}\;\;
v_i^a\otimes v_{p-i}^c, 
\end{align*}
and this is, up to a scalar, equal to
\[
w^{b,c,\mu}=\sum_{i=0}^{\min\{p,a\}}  (-1)^{i} 
\sqrt{\frac{(a+m-i)!^2\;(c+i-p)! }
{i!\;(p-i)!\;(a-i)!}}\;\;
v_i^a\otimes v_{p-i}^c.
\] 
We will show that $w^{a,d,\mu}$ and $w^{b,c,\mu}$ 
are linearly independent. 
It suffices to show that the ratio of the first two coefficients 
(corresponding to $i=0,1$) of $w^{a,d,\mu}$ and $w^{b,c,\mu}$ differ from each other. 
(We recall that $p\ge 1$ and hence $a\ge 1$ (see \ref{eq:cota_p_ZZ}). Note also that 
all the coefficients in both
$w^{a,d,\mu}$ and $w^{b,c,\mu}$ are non-zero.)

The ratio of the first two coefficients 
of $w^{a,d,\mu}$ is
\[ 
-
\frac{
\sqrt{\dfrac{a!\;(c+m-p)!^2}
{p!\;(c-p)!}}
}
{
\sqrt{\dfrac{(a-1)!\;(c+m-p+1)!^2}
{(p-1)!\;(c+1-p)!}}
}
=
-
\sqrt{\dfrac{a\;(c+1-p)}
{p\;(c+m-p+1)^2}},
\]
and ratio of the first two coefficients 
of $w^{b,c,\mu}$ is
\[ 
-
\frac{
\sqrt{\dfrac{(a+m)!^2\;(c-p)! }
{p!\;a!}}
}
{
\sqrt{\dfrac{(a+m-1)!^2\;(c+1-p)! }
{(p-1)!\;(a-1)!}} 
}
=
-
\sqrt{\dfrac{(a+m)^2}
{p\;a\;(c+1-p)}
}.
\]
This two ratios are different since
$
a^2(c+1-p)^2<(a+m)^2(c+m-p+1)^2.
$
This completes the proof of this case.

\bigskip

\noindent
\emph{The case $Z(b,1)^*\otimes Z(d,1)^*$}.
Here $a=b+m$, $c=d+m$,  and \eqref{eq:cotas_mu} implies
\[
\mu=b+d+m-2p,\quad 0\le p\le \min\{b,d\}.
\] 
We know from  \eqref{eq.imasj} that 
\begin{equation*}
0\le i+j=p+s.
\end{equation*}
Recall that \eqref{eq.Coef_no_nulo_ad} implies $k=p-i\ge 0$ and 
\eqref{eq:esvad} says
\begin{align*}
e_sv_0^{a,d,\mu}
& =\sum_{i=0}^{\min\{a,p\}} (-1)^{p-i}\;
CG(\tfrac{b+m}{2},\tfrac{b+m}{2}-i;\,\tfrac{d}{2},\tfrac{d}{2}-p+i
\,|\,\tfrac{b+d+m}{2}-p,\tfrac{b+d+m}{2}-p)\notag \\
&\hspace{1.3cm}\times 
CG(\tfrac{d+m}{2},\tfrac{d+m}{2}-p-s+i;\,\tfrac{d}{2},-\tfrac{d}{2}+p-i
\,|\,\tfrac{m}{2},\tfrac{m}{2}-s)\,
v_{i}^a\otimes v_{p+s-i}^{c} \\[3mm]
&  =\sum_{i=0}^{\min\{a,p\}} (-1)^{i}
\sqrt{
\frac{
(b+m+d-2p+1)!\;p!\;(b+m-i)!\;(d-p+i)! 
}{
(b+m+d-p+1)!\; (b+m-p)!\; (d-p) !\; i!\; (p-i)!
}
}  \\ 
&\hspace{2cm}\times
\sqrt{
\frac{
(d+m-p-s+i)!\;(p+s-i)!\;(m+1)!\;d!\;
}{
(d+m+1)!\;(p-i)!\;(d-p+i)!\;(m-s)!\;s!
}
} \;\;
v_{i}^a\otimes v_{p+s-i}^{c}.
\end{align*}
As always, the reader should check that all
the numbers under the factorial sign are non-negative. 

On the other hand, \eqref{eq.Coef_no_nulo_bc} implies 
$k=i-s\ge 0$ and 
\eqref{eq:esvbc} says
\begin{align*}
e_sv_0^{b,c,\mu}
&=\sum_{i=s}^{\min\{a,p+s\}} (-1)^{i-s} \\
& \hspace{1.5cm} \times
CG(\tfrac{b}{2},\tfrac{b}{2}-i+s;\,
\tfrac{d+m}{2},\tfrac{d+m}{2}-p-s+i
\,|\,\tfrac{b+d+m}{2}-p,\tfrac{b+d+m}{2}-p)\,   \\ 
&\hspace{2.5cm}\times
CG(\tfrac{b+m}{2},\tfrac{b+m}{2}-i;\,
\tfrac{b}{2},-\tfrac{b}{2}+i-s
\,|\,\tfrac{m}{2},\tfrac{m}{2}-s)\;\;
v_i^a\otimes v_{p+s-i}^c \\[3mm]
& =\sum_{i=s}^{\min\{a,p+s\}} (-1)^{i-s} \\
& \hspace{1.4cm}\times \sqrt{
\frac{
(b+m+d-2p+1)!\;p!\;(b-i+s)!\;(d+m-p-s+i)! 
}{
(b+m+d-p+1)!\; (b-p)!\; (d+m-p)!\; (i-s)!\; (p+s-i)!
}
}  \\ 
&\hspace{2.2cm}\times
\sqrt{
\frac{
(b+m-i)!\;i!\;(m+1)!\;b!\;
}{
(b+m+1)!\;(i-s)!\;(b-i+s)!\;(m-s)!\;s!
}
} \;\; 
v_i^a\otimes v_{p+s-i}^c.
\end{align*}
Again, the reader should check that all
the numbers under the factorial sign are non-negative. 

Now, for $s=1$, 
the sum describing 
$e_1v_0^{a,d,\mu}$ starts at $i=0$ with non-zero coefficient, while  the sum describing 
$e_1v_0^{b,c,\mu}$ starts at $i=1$.
This proves that  
$\{e_1v_0^{a,d,\mu},e_1v_0^{b,c,\mu}\}$ is linearly independent
 and hence 
there is no possible $\mu$ in $S_1$, that is $S_1=0$.
This completes the proof in this case.

\bigskip

\noindent
\emph{The case $Z(a,1)\otimes Z(d,1)^*$}.
Here $b=a+m$, $c=d+m$ and we first assume 
\[
a\le d.
\] 
In this case 
\[
\mu=d-a+2p,\quad 0\le p\le a,
\] 
and it follows from  \eqref{eq.imasj}
that
\begin{equation*}
0\le i+j=a-p+s.
\end{equation*}
Recall that \eqref{eq:esvad} says
\begin{multline*}
e_sv_0^{a,d,\mu}
=\sum_{i,j,k} (-1)^k
CG(\tfrac{a}{2},\tfrac{a}{2}-i;\,
\tfrac{d}{2},\tfrac{d}{2}-k
\,|\,\tfrac{d-a}{2}+p,\tfrac{d-a}{2}+p)\, \\ 
\times
CG(\tfrac{d+m}{2},\tfrac{d+m}{2}-j;\,
\tfrac{d}{2},-\tfrac{d}{2}+k
\,|\,\tfrac{m}{2},\tfrac{m}{2}-s)\;\;
v_{i}^a\otimes v_{j}^{c}.
\end{multline*}
It follows from \eqref{eq.Coef_no_nulo_ad}
that 
\begin{align*}
k &= a-p -i, \\
j & = a-p +s -i,
\end{align*}
and the condition $k\ge 0$ implies $i\le a-p$. 
Hence 
\begin{align*}
e_sv_0^{a,d,\mu}
&  =\sum_{i=0}^{a-p} (-1)^{a-p-i}
CG(\tfrac{a}{2},\tfrac{a}{2}-i;\,
\tfrac{d}{2},\tfrac{d}{2}-a+p+i
\,|\,\tfrac{d-a}{2}+p,\tfrac{d-a}{2}+p)\, \\ 
&\hspace{1.3cm}\times
CG(\tfrac{d+m}{2},\tfrac{d+m}{2}-a+p-s+i;\,
\tfrac{d}{2},-\tfrac{d}{2}+a-p-i
\,|\,\tfrac{m}{2},\tfrac{m}{2}-s) \\
&\hspace{9cm}\times\;
v_{i}^a\otimes v_{a-p+s-i}^{c} \\[3mm]
&  =\sum_{i=0}^{a-p} (-1)^{i}
\sqrt{
\frac{
(d-a+2p+1)!\;(a-p)!\;(a-i)!\;(d-a+p+i)! 
}{
(d+p+1)!\; p!\; (d-a+p) !\; i!\; (a-p-i)!
}
}  \\ 
&\hspace{1.3cm}\times
\sqrt{
\frac{
(d+m-a+p-s+i)!\;(a-p+s-i)!\;(m+1)!\;d!\;
}{
(d+m+1)!\;(a-p-i)!\;(d-a+p+i)!(m-s)!\;s!
}
}\\
&\hspace{9cm}\times\;
v_{i}^a\otimes v_{a-p+s-i}^{c} .
\end{align*}
At this point, the reader should check that all
the numbers under the factorial sign are non-negative. 
This last sum is, up to the non-zero scalar
\[
\sqrt{\frac{1}{s!\;(m-s)!}}\;
\sqrt{
\frac{
(d-a+2p+1)!\;(a-p)!\;(m+1)!\;d!
}{
(d+p+1)!\; p!\; (d-a+p) !\; (d+m+1)!
}
}  
,\]
equal to
\[
w^{a,d,\mu}_s=
\sum_{i=0}^{a-p} (-1)^{i}
\sqrt{
\frac{
(a-i)!\;(d+m-a+p-s+i)!\;(a-p+s-i)!
}{
i!\; (a-p-i)!^2
}
} \;\;
v_{i}^a\otimes v_{a-p-i}^{c}.
\]

On the other hand, \eqref{eq:esvbc} says
\begin{multline*}
e_sv_0^{b,c,\mu}
=\sum_{i,j,k} (-1)^k 
CG(\tfrac{b}{2},\tfrac{b}{2}-k;\,
\tfrac{c}{2},\tfrac{c}{2}-j
\,|\,\tfrac{\mu}{2},\tfrac{\mu}{2})\,   \\ 
\times
CG(\tfrac{a}{2},\tfrac{a}{2}-i;\,
\tfrac{b}{2},-\tfrac{b}{2}+k
\,|\,\tfrac{m}{2},\tfrac{m}{2}-s)\;\;
v_i^a\otimes v_j^c 
\end{multline*}
and it follows from 
\eqref{eq.Coef_no_nulo_bc}
that 
\begin{align*}
j & = a-p+s-i, \\
k & =m+i-s,
\end{align*}
and the condition $j\ge 0$ implies $i\le a-p+s$.
Thus 
\begin{align*}
e_sv_0^{b,c,\mu}
& =\sum_{i=0}^{\min\{a,a-p+s\}} (-1)^{m+i-s} \\
& \hspace{1cm}\times 
CG(\tfrac{a+m}{2},\tfrac{a-m}{2}-i+s;\,
\tfrac{d+m}{2},\tfrac{d+m}{2}-a+p-s+i
\,|\,\tfrac{d-a}{2}+p,\tfrac{d-a}{2}+p)\,   \\[1mm] 
&\hspace{2cm}\times
CG(\tfrac{a}{2},\tfrac{a}{2}-i;\,
\tfrac{a+m}{2},-\tfrac{a-m}{2}+i-s
\,|\,\tfrac{m}{2},\tfrac{m}{2}-s)\;\;
v_i^a\otimes v_{a-p+s-i}^c \\[3mm]
& =\sum_{i=0}^{\min\{a,a-p+s\}} (-1)^{i} \\
& \hspace{1.2cm}\times 
\sqrt{
\frac{
(d-a+2p+1)!\,(m+a-p)!\,(a-i+s)!\,(d+m-a+p+i-s)!
}{
(m+d+p+1)!\,p!\,(d-a+p)!\,(m+i-s)!\,(a-p-i+s)!
}
}  \\ 
&\hspace{2cm}\times
\sqrt{
\frac{ 
(a-i+s)!\;(m+i-s)!\,a!\,(m+1)!\,
}{
(m+1+a)!\,(a-i)!\,i!\;s!\;(m-s)!
}
} \;\;
v_i^a\otimes v_{a-p+s-i}^c.
\end{align*}
As above, at this point, the reader should check that all
the numbers under the factorial sign are non-negative. 
The above sum is, up to the non-zero scalar
\[
\sqrt{\frac{1}{s!\;(m-s)!}}\;
\sqrt{
\frac{
(d-a+2p+1)!\,(m+a-p)!\;(m+1)!\,a!
}{
(m+d+p+1)!\,p!\,(d-a+p)!\;(m+1+a)!
}
}, 
\]
equal to
\[
w^{b,c,\mu}_s=
\sum_{i=0}^{\min\{a,a-p+s\}} (-1)^{i}
\sqrt{
\frac{
(a-i+s)!^2\,(d+m-a+p+i-s)!\;
}{
(a-p-i+s)!\,(a-i)!\;i!
}
} \;\;
v_i^a\otimes v_{a-p+s-i}^c.
\]
If $p=0$ then 
\[
w^{a,d,\mu}_s=w^{b,c,\mu}_s=
\sum_{i=0}^{a} (-1)^{i}
\sqrt{
\frac{
(a-i+s)!\,(d+m-a+i-s)!\;
}{
(a-i)!\;i!
}
} \;\;
v_i^a\otimes v_{a+s-i}^c
\]
for all $s=0,\dots,m$. 
This shows that 
\[
u_0=\sqrt{d+1}\;v_0^{a,d,\mu}\;
-
\sqrt{b+1}\;   v_0^{b,c,\mu}
\]
is, indeed, a highest weight vector, 
of weight $\mu=d-a$, 
in $S_1$. 

On the other hand, assume $p\ge 1$.
Then, for $s=1$, the sum defining 
$w^{b,c,\mu}_1$ has the index $i$ running 
up to $i=a+1-p$ while 
in the sum defining 
$w^{a,d,\mu}_1$ the index $i$ only runs up to $i=a-p$.
In both cases, 
all the coefficients are non-zero, and thus 
$\{w^{a,d,\mu}_1, w^{b,c,\mu}_1\}$ is linearly independent. 
This completes the proof in the case $d\ge a$

\medskip

We now assume 
\[
a>d.
\]
In this case 
\[
\mu=a-d+2p,\quad 0\le p\le d,
\] 
and it follows from  \eqref{eq.imasj}
that
\begin{equation*}
0\le i+j=d-p+s.
\end{equation*}

From \eqref{eq:esvad} we have
\begin{multline*}
e_sv_0^{a,d,\mu}
=\sum_{i,j,k} (-1)^k
CG(\tfrac{a}{2},\tfrac{a}{2}-i;\,
\tfrac{d}{2},\tfrac{d}{2}-k
\,|\,\tfrac{a-d}{2}+p,\tfrac{a-d}{2}+p)\, \\ 
\times
CG(\tfrac{d+m}{2},\tfrac{d+m}{2}-j;\,
\tfrac{d}{2},-\tfrac{d}{2}+k
\,|\,\tfrac{m}{2},\tfrac{m}{2}-s)\;\;
v_{i}^a\otimes v_{j}^{c}.
\end{multline*}
It follows from \eqref{eq.Coef_no_nulo_ad}
that 
\begin{align*}
j & = d-p +s -i, \\
k &= d-p -i.
\end{align*}
and the condition $k\ge 0$ implies $i\le d-p$. 
Hence 
\begin{align*}
e_sv_0^{a,d,\mu}
&  =\sum_{i=0}^{d-p} (-1)^{d-p-i}
CG(\tfrac{a}{2},\tfrac{a}{2}-i;\,
\tfrac{d}{2},\tfrac{d}{2}-d+p+i
\,|\,\tfrac{a-d}{2}+p,\tfrac{a-d}{2}+p)\, \\ 
&\hspace{1.3cm}\times
CG(\tfrac{d+m}{2},\tfrac{d+m}{2}-d+p-s+i;\,
\tfrac{d}{2},-\tfrac{d}{2}+d-p-i
\,|\,\tfrac{m}{2},\tfrac{m}{2}-s) \\
&\hspace{8.8cm}\times\;
v_{i}^a\otimes v_{d-p+s-i}^{c} \\[3mm]
&  =\sum_{i=0}^{d-p} (-1)^{i}
\sqrt{
\frac{
(a-d+2p+1)!\;(d-p)!\;(a-i)!\;(p+i)! 
}{
(a+p+1)!\; p!\; (a-d+p) !\; i!\; (d-p-i)!
}
}  \\ 
&\hspace{1.7cm}\times
\sqrt{
\frac{
(m+p-s+i)!\;(d-p+s-i)!\;(m+1)!\;d!\;
}{
(d+m+1)!\;(d-p-i)!\;(p+i)!(m-s)!\;s!
}
} \;\;
v_{i}^a\otimes v_{d-p+s-i}^{c}.
\end{align*}
At this point, the reader should check that all
the numbers under the factorial sign are non-negative. 
This last sum is, up to the non-zero scalar
\[
\sqrt{
\frac{
(a-d+2p+1)!\;(d-p)!\;(m+1)!\;d!
}{
(a+p+1)!\; p!\; (a-d+p) !\; (d+m+1)!\;(m-s)!\;s!
}
}  
,\]
equal to
\[
w^{a,d,\mu}_s=
\sum_{i=0}^{d-p} (-1)^{i}
\sqrt{
\frac{
(a-i)!\;(m+p-s+i)!\;(d-p+s-i)!
}{
i!\; (d-p-i)!^2
}
} \;\;
v_{i}^a\otimes v_{d-p+s-i}^{c}.
\]

On the other hand, recall that \eqref{eq:esvbc} is
\begin{multline*}
e_sv_0^{b,c,\mu}
=\sum_{i,j,k} (-1)^k 
CG(\tfrac{b}{2},\tfrac{b}{2}-k;\,
\tfrac{c}{2},\tfrac{c}{2}-j
\,|\,\tfrac{\mu}{2},\tfrac{\mu}{2})\,   \\ 
\times
CG(\tfrac{a}{2},\tfrac{a}{2}-i;\,
\tfrac{b}{2},-\tfrac{b}{2}+k
\,|\,\tfrac{m}{2},\tfrac{m}{2}-s)\;\;
v_i^a\otimes v_j^c 
\end{multline*}
and it follows from 
\eqref{eq.Coef_no_nulo_bc}
that 
\begin{align*}
j & = d-p+s-i, \\
k & =m+i-s,
\end{align*}
and the condition $j\ge 0$ implies $i\le d-p+s$.
Thus 
\begin{align*}
e_sv_0^{b,c,\mu}
& =\sum_{i=0}^{\min\{a,d-p+s\}} (-1)^{m+i-s} \\
& \hspace{1.4cm}\times 
CG(\tfrac{a+m}{2},\tfrac{a-m}{2}-i+s;\,
\tfrac{d+m}{2},\tfrac{m-d}{2}+p-s+i
\,|\,\tfrac{a-d}{2}+p,\tfrac{a-d}{2}+p)\,   \\[1mm] 
&\hspace{2cm}\times
CG(\tfrac{a}{2},\tfrac{a}{2}-i;\,
\tfrac{a+m}{2},-\tfrac{a-m}{2}+i-s
\,|\,\tfrac{m}{2},\tfrac{m}{2}-s)\;\;
v_i^a\otimes v_{d-p+s-i}^c \\[3mm]
& =\sum_{i=0}^{\min\{a,d-p+s\}} (-1)^{i} \\
& \hspace{1.4cm}\times 
\sqrt{
\frac{
(a-d+2p+1)!\,(m+d-p)!\,(a-i+s)!\,(m+p+i-s)!
}{
(m+a+p+1)!\,p!\,(a-d+p)!\,(m+i-s)!\,(d-p-i+s)!
}
}  \\ 
&\hspace{2cm}\times
\sqrt{
\frac{ 
(a-i+s)!\;(m+i-s)!\,a!\,(m+1)!\,
}{
(m+1+a)!\,(a-i)!\,i!\;s!\;(m-s)!
}
} \;\;
v_i^a\otimes v_{d-p+s-i}^c.
\end{align*}
As above, at this point, the reader should check that all
the numbers under the factorial sign are non-negative. 
The above sum is, up to the non-zero scalar
\[
\sqrt{
\frac{
(a-d+2p+1)!\,(m+d-p)!\;(m+1)!\,a!
}{
(m+a+p+1)!\,p!\,(a-d+p)!\;(m+1+a)!\,s!\;(m-s)!
}
}, 
\]
equal to
\[
w^{b,c,\mu}_s=
\sum_{i=0}^{\min\{a,d-p+s\}} (-1)^{i}
\sqrt{
\frac{
(a-i+s)!^2\,(m+p+i-s)!
}{
(d-p-i+s)!\,(a-i)!\;i!
}
} \;\;
v_i^a\otimes v_{d-p+s-i}^c.
\]
Since $a>d$, for $s=1$, 
the sum defining 
$w^{b,c,\mu}_1$ has the index $i$ running up to 
$i=d+1-p$ while 
the sum defining 
$w^{a,d,\mu}_1$ has the index $i$ running only 
up to $i=d-p$.
In both cases, 
all the coefficients are non-zero, and thus 
$\{w^{a,d,\mu}_1, w^{b,c,\mu}_1\}$ is linearly independent. 
This shows that there is no possible $\mu$ in $S_1$ and thus $S_1=0$.
This completes the proof in this case.

\medskip

Since the case $Z(b,1)^* \otimes Z(c,1)$ is derived from the 
case $Z(a,1)\otimes Z(d,1)^*$, we have completed the proof
of the theorem. We warn the reader that in order to 
obtain 
the highest weight vector in this case from 
the case
$Z(a,1)\otimes Z(d,1)^*$, 
it is needed to swap the tensor factors to go 
from $v_0^{d,a,\mu}$ and $v_0^{c,b,\mu}$
to $v_0^{a,d,\mu}$ and $v_0^{b,c,\mu}$ respectively.
To do this it is needed \eqref{eq:swap}.
\end{proof}

 \begin{theorem}\label{thm:main}
Let 
$V=V(a_0)\oplus \hdots \oplus V(a_\ell)$ and 
$W=V(b_0)\oplus \hdots \oplus V(b_{\ell'})$
be socle decomposition of two uniserial $\g_m$-modules of type $Z$.
Then
\[
\text{soc}(V \otimes W)=\text{soc}(V)\otimes \text{soc}(W)\;\oplus\; \bigoplus_{t=1}^{\min\{\ell,\ell'\}}S_t
\] 
where each $S_t$ is as follows:
\begin{enumerate}[(i)]
\item For 
$V=Z(a_0,\ell)$ and 
$W=Z(b_0,\ell')$
we have 
$S_t\simeq V(a_0+b_0+mt)$ and thus 
\[
\text{soc}(V \otimes W)\simeq 
\bigoplus_{k=0}^{\min\{a_0,b_0\}}
V(a_0+b_0-2k)\;\;
\oplus\;\;
\bigoplus_{t=1}^{\min\{\ell,\ell'\}}
V(a_0+b_0+mt).
\]

\medskip

\item For 
$V=Z(a_0,\ell)$ and 
$W=Z(b_{\ell'},\ell')^*$
we have 
\[
S_t\simeq
\begin{cases}
 0, & \text{if $a_0> b_t$};\\
 V(b_0-a_0-tm), & \text{if $a_0\le b_t$};
\end{cases}
\]
(note that $b_t=b_0-tm$) 
and thus 
\[
\text{soc}(V \otimes W)\simeq 
\begin{cases}
 \displaystyle\bigoplus_{k=0}^{b_0}
V(a_0-b_0+2k), & \text{if $a_0> b_0$};\\[5mm]
 \displaystyle\bigoplus_{k=0}^{a_0}
V(b_0-a_0+2k)\;\;
\oplus\;\;
\bigoplus_{t=1}^{T}
V(b_0-a_0-tm), & \text{if $a_0\le b_0$},
\end{cases}
\]
with $T=\min\left\{\ell,\ell',\left\lfloor\frac{b_0-a_0}{m}\right\rfloor\right\}$.

\medskip

\item For 
$V=Z(a_{\ell},\ell)^*$ and 
$W=Z(b_{\ell'},\ell')^*$
we have $S_t=0$ for all $t\ge 1$, and thus 
\[
\text{soc}(V \otimes W)\simeq
\bigoplus_{k=0}^{\min\{a_0,b_0\}}
V(|a_0-b_0|+2k).
\]
\end{enumerate}
In particular, $\text{soc}(V \otimes W)$ is multiplicity free as
a representation of $\sl(2)$.
\end{theorem}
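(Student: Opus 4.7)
The plan is to combine the general reduction of Section \ref{sec:Gen_cons} with a pair-by-pair application of Theorem \ref{thm:length_2}. By \eqref{eq:soc=socxsoc+S_0} it suffices to determine each $S_t$ for $1\le t\le \min\{\ell,\ell'\}$, and Proposition \ref{prop:Zocalo_t} already tells us that every highest weight vector in $S_t$ has the form $u=\sum_{i=0}^t q_i v_0^{a_i,b_{t-i},\mu}$ with all $q_i$ non-zero, in a space of dimension at most one. The remaining task is to identify the admissible weights $\mu$ and to exhibit non-zero scalars $q_i$ satisfying $e_s u=0$ for all $s$.

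Since neither $V$ nor $W$ is of type $E_4$, formula \eqref{eq:r in type Z} ensures that $e_s u\in\bigoplus_{i+j=t-1}V(a_i)\otimes V(b_j)$, and projecting $e_s u=0$ onto the summand $V(a_i)\otimes V(b_j)$ with $i+j=t-1$ yields
\[
q_{i+1}\,(e_s v_0^{a_{i+1},b_j,\mu})_1 \;+\; q_i\,(e_s v_0^{a_i,b_{j+1},\mu})_2\;=\;0,\qquad s=0,\dots,m,
\]
for each $0\le i\le t-1$. This is precisely the system \eqref{eq.system_2} attached to the length-$2$ tensor product $V_1^{(i)}\otimes V_2^{(i)}$, where $V_1^{(i)}:=V(a_i)\oplus V(a_{i+1})$ and $V_2^{(i)}:=V(b_{t-i-1})\oplus V(b_{t-i})$ are the length-$2$ subquotients of $V$ and $W$, themselves uniserials of type $Z$. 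Thus Theorem \ref{thm:length_2} applies directly for each $i$, determining both the admissible weight $\mu$ and the ratio $q_{i+1}/q_i$; global consistency of these ratios is automatic because consecutive pair-equations only couple $(q_i,q_{i+1})$ and $(q_{i+1},q_{i+2})$, while the 1-dimensional bound from Proposition \ref{prop:Zocalo_t} already rules out any conflict.

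Running through the three cases: in (i) each pair has type $(Z(a_i,1),Z(b_{t-i-1},1))$, and entry (1,1) of Theorem \ref{thm:length_2} forces $\mu=a_i+b_{t-i}=a_0+b_0+tm$ (independent of $i$) with non-zero ratios, giving $S_t\simeq V(a_0+b_0+tm)$. In (iii) each pair has type $(Z(a_{i+1},1)^*,Z(b_{t-i},1)^*)$ and entry (2,2) states that its local $S_1$ vanishes, so no non-trivial solution exists and $S_t=0$ for $t\ge 1$. In (ii) each pair has type $(Z(a_i,1),Z(b_{t-i},1)^*)$; entry (1,2) yields a non-zero solution iff $a_i\le b_{t-i}$, which is the uniform condition $tm\le b_0-a_0$, with $\mu=b_{t-i}-a_i=b_0-a_0-tm$. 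Combined with $t\le \ell,\ell'$, this gives $S_t\simeq V(b_0-a_0-tm)$ exactly for $1\le t\le T:=\min\{\ell,\ell',\lfloor(b_0-a_0)/m\rfloor\}$.

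To finish, $\text{soc}(V)\otimes\text{soc}(W)=V(a_0)\otimes V(b_0)$ decomposes by Clebsch--Gordan \eqref{eq.tensor}, and adding the $S_t$ contributions produces the three formulas in the statement. Multiplicity-freeness is then immediate: in cases (i) and (ii) the $S_t$-weights for $t\ge 1$ lie strictly outside the range of weights appearing in $V(a_0)\otimes V(b_0)$ (exceeding $a_0+b_0$ in (i), below $|b_0-a_0|$ in (ii)), and consecutive $S_t$'s differ by $m\ne 0$. The bulk of the technical work --- the explicit Clebsch--Gordan calculations --- has already been absorbed into Theorem \ref{thm:length_2}; the main obstacle is thus the reduction step, where one must verify that the global annihilation system for $u$ splits exactly into the pair-by-pair subsystems identified above.
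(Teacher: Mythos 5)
Your proposal is correct and follows essentially the same route as the paper: reduce to the length-$2$ uniserial subquotients $\tilde V=V(a_i)\oplus V(a_{i+1})$, $\tilde W=V(b_{t-1-i})\oplus V(b_{t-i})$ and observe (via \eqref{eq:r in type Z}) that each component $V(a_i)\otimes V(b_{t-1-i})$ of $e_su=0$ is exactly the annihilation system for $S_1(\tilde V,\tilde W)$, then invoke Theorem \ref{thm:length_2} entry by entry. Your explicit chain-coupling argument for assembling the ratios $q_{i+1}/q_i$ is the precise content of the paper's ``choose a highest weight vector \dots in a recursive way.''
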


\begin{remark}
Items (i), (ii) and (iii) are not mutually exclusive since they have intersection when 
$\ell=0$ or $\ell'=0$. 
We can make them exclusive by requiring $\ell,\ell'\ge 1$ in (ii).
In that case, the second sum in $\text{soc}(V \otimes W)$ in (ii)
is non-empty if and only if $a_0+m\le b_0$.
\end{remark}

\begin{remark}\label{rmk:A2Am}
If $U\simeq V\otimes W$ with $V$ and $W$ as in Theorem \ref{thm:main}, then the list of 
highest weights appearing in 
the $\sl(2)$-decomposition of $\text{soc}(U)$
consists of the union of two sets $A_2(U)$ and $A_m(U)$ whose elements are in 
arithmetic progressions with common differences 
2 and $m$ respectively.  
In all cases 
\[
A_2(U)=\{|a_0-b_0|,|a_0-b_0|+2,\dots,a_0+b_0\}
\]
($A_2(U)$ consists of all the highest weights of 
$\text{soc}(V)\otimes\text{soc}(W)$)
and 
\[
A_m(U)=
\begin{cases}
\left\{a_0+b_0+m,\dots,a_0+b_0+\min\{\ell,\ell'\}m\right\}, &
\text{in case (i) with $\ell,\ell'\ge 1$}; \\[2mm]
\left\{b_0\!-\!a_0\!-\!\min\{\ell,\ell',\lfloor\frac{b_0-a_0}{m}\rfloor\}m,\dots,b_0\!-\!a_0\!-\!m\right\}, &
\text{in case (ii) with $\ell,\ell'\ge 1$} \\[-1mm]
&
\text{and $b_0\ge a_0+m$}; \\[2mm]
\emptyset, &
\text{ otherwise.}
\end{cases}
\]
Note that $A_2(U)$ and $A_m(U)$ are disjoint sets and 
given $A_2(U)\cup A_m(U)$, 
if $m\ne 2$, it is clear how to identify $A_2(U)$ and $A_m(U)$.
\end{remark}

\begin{proof}[Proof of Theorem \ref{thm:main}] 
It follows from \eqref{eq:soc=socxsoc+S_0}
and the Clebsch-Gordan formula 
for the decomposition of the tensor product
of irreducible $\sl(2)$-modules, that 
in order to prove this theorem we only need to study 
$S_t$. 

Let us fix $t_0>1$ and assume 
$S_{t_0}\ne 0$.
We know from Proposition 
\ref{prop:Zocalo_t} that $t_0\le\min\{ \ell,\ell'\}$.
We will first show that if $\mu$ is the weight of 
a highest weight 
vector
\[
u\in S_{t_0}=S_{t_0}(V,W)=\Big(\bigoplus_{i+j=t_0} V(a_{i})\otimes V(b_j)\Big)^{\mathfrak{r}},
\]
then $\mu$ is as claimed in the theorem. 

We have  
\[
u=\bigoplus_{i+j=t_0} u_{i,j}
\]
with $u_{i,j}\in V(a_{i})\otimes V(b_j)$, then each
$u_{i,j}$ must be a highest weight 
vector of weight $\mu$.
Moreover, since $\mathfrak{r}u=0$ it follows 
from \eqref{eq:r in type Z} that 
\begin{equation}\label{eq:r suma 0 0}
(X\,u_{i,j})_2+(X\,u_{i+1,j-1})_1=0
\end{equation}
for all $X\in\mathfrak{r}$ and for all $i,j$ such that $i+j=t_0$ and $i+1\le\ell$ and $j\ge1$. 
In other words,
let us fix $i_0,j_0$ so that 
$i_0+j_0=t_0-1$. In particular  $i_0<\ell$ and $j_0<\ell'$. 
Now we may consider 
uniserial subquotients $\tilde V$
and $\tilde W$ 
whose socle decompositions are
\[
\tilde V=V(a_{i_0})\oplus V(a_{i_0+1})\qquad 
\tilde W=V(b_{j_0})\oplus V(b_{j_0+1}).
\]
Then, \eqref{eq:r suma 0 0} is equivalent to say that 
\[
u_{i_0,j_0+1}+u_{i_0+1,j_0}\in S_1(\tilde V,\tilde W)
\]
and we know that it is a highest weight 
vector of weight $\mu$.

We now apply Theorem \ref{thm:length_2} and we obtain
that 
\begin{enumerate}[(i)]
\item If $V=Z(a_0,\ell)$ and 
$W=Z(b_0,\ell')$, then $a_i=a_0+im$ and $b_j=b_0+jm$.
This implies
$V=Z(a_{i_0},1)$, 
$W=Z(b_{j_0},1)$ and hence 
$S_1(\tilde V,\tilde W)\simeq V(a_{i_0}+b_{j_0}+m)$.
that is 
\[
\mu=a_{i_0}+b_{j_0}+m=a_{0}+i_0m+b_{0}+j_0m+m= a_{0}+b_{0}+t_0m.
\]
\item If $V=Z(a_0,\ell)$ and 
$W=Z(b_0,\ell')^*$, then 
$a_i=a_0+im$ and $b_j=b_0-jm$.
This implies
$V=Z(a_{i_0},1)$,  
$W=Z(b_{j_0+1},1)^*$ and hence 
\[
S_1(\tilde V,\tilde W)\simeq\begin{cases} 
V(b_{j_0+1}-a_{i_0}),& 
\text{if $a_{i_0}\le b_{j_0+1}$;} \\ 
0,& \text{if $a_{i_0}> b_{j_0+1}$;} \end{cases}
\]
which is equivalent to 
\[
S_1(\tilde V,\tilde W)\simeq\begin{cases} 
V(b_{0}-a_{0}-t_0m),& 
\text{if $a_{0}+i_0m \le b_{0}-j_0m-m$;} \\ 
0,& \text{if $a_{0}+i_0m > b_{0}-j_0m-m$.} \end{cases}
\]
Thus, if $S_1(\tilde V,\tilde W)\ne 0$ then 
$\mu=b_{0}-a_{0}-t_0m$.
\item If $V=Z(a_0,\ell)^*$ and 
$W=Z(b_0,\ell')^*$, then $S_1(\tilde V,\tilde W)= 0$.
\end{enumerate}

This completes the first part of the proof, that is 
if $S_{t_0}(V,W)\ne 0$
then the have proved that only highest weight $\mu$ 
appearing in $S_{t_0}(V,W)$ are as claimed. 

Conversely, assume that $\mu$ is a weight 
claimed to appear in $S_{t_0}(V,W)$. 
The above analysis shows, in each case, 
that $S_1(\tilde V,\tilde W)$ is isomorphic to 
$V(\mu)$ for all $i_0+j_0=t_0-1$.
Now we can choose a highest weight vector 
\[
u_{i_0,j_0+1}+u_{i_0+1,j_0}\in 
V(a_{i_0})\otimes V(a_{j_0+1})
\; \oplus  \;
V(a_{i_0+1})\otimes V(b_{j_0})\subset
S_1(\tilde V,\tilde W)
\]
in a recursive way so that 
\[
\sum_{i=0}^{t_0-1}
u_{i,t_0-i}+u_{i+1,t_0-i-1}
\]
is a highest weight vector of weight $\mu$ in 
$S_{t_0}(V,W)$.
This completes the proof of the theorem.
\end{proof}

\begin{corollary}\label{coro:soc_completo}
Let 
$V=V(a_0)\oplus \hdots \oplus V(a_\ell)$ and 
$W=V(b_0)\oplus \hdots \oplus V(b_{\ell'})$
be socle decomposition of two uniserial $\g_m$-modules of type $Z$.
Then:
\begin{enumerate}[(i)]
\item
$\text{soc}(V \otimes W)=\text{soc}(V)\otimes \text{soc}(W)$
if and only if 
\begin{enumerate}[\hspace{1mm}(a)]
\item $\ell=0$ or $\ell'=0$,
\item $V=Z(a_{\ell},\ell)^*$ and 
$W=Z(b_{\ell'},\ell')^*$, $\ell,\ell'\ge 0$,
\item $V=Z(a_0,\ell)$ and 
$W=Z(b_{\ell'},\ell')^*$ with $b_0<a_0+m$ and $\ell,\ell'\ge 1$,
\item  $V=Z(a_{\ell},\ell)^*$ and 
$W=Z(b_{0},\ell')$ with $a_0<b_0+m$ and $\ell,\ell'\ge 1$.
\end{enumerate}

\medskip

\item In any of the cases described in (i),
the socle length of $V\otimes W$ is 
$\ell+\ell'+1$ and 
\[
\text{soc}^{t+1}(V\otimes W)=\sum_{i=0}^{t}
\text{soc}^{i+1}(V)\otimes 
\text{soc}^{t+1-i}(W)=\bigoplus_{0\le i+j\le t}
V(a_i)\otimes V(b_j)
\]
as $\sl(2)$-modules 
for all $0\le t\le \ell+\ell'$.
\end{enumerate}
\end{corollary}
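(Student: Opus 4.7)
The plan is to deduce both items essentially mechanically from Theorem \ref{thm:main} and Lemma \ref{lemma:soc_series}.

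For item (i), I would simply inspect the three cases of Theorem \ref{thm:main} to determine when every $S_t$ with $t\ge 1$ vanishes. In case (i) of that theorem, $S_t\simeq V(a_0+b_0+mt)$ for $1\le t\le \min\{\ell,\ell'\}$, so they all vanish iff $\min\{\ell,\ell'\}=0$, which is condition (a). In case (ii), $S_t\simeq V(b_0-a_0-mt)$ whenever $a_0\le b_0-mt$, so every $S_t$ with $t\ge 1$ vanishes iff $b_0<a_0+m$, yielding (c) once $\ell,\ell'\ge 1$ is imposed to keep the situations disjoint from (a). Case (iii) always has $S_t=0$, giving (b), and (d) is obtained from (c) by exchanging the two tensor factors.

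For item (ii), under any of the hypotheses in (i) one has $\text{soc}(V\otimes W)=V(a_0)\otimes V(b_0)$. Setting
\[
U_t:=\bigoplus_{i+j=t} V(a_i)\otimes V(b_j),\qquad t=0,1,\dots,\ell+\ell',
\]
we obtain a vector-space decomposition $V\otimes W = U_0\oplus U_1\oplus\cdots\oplus U_{\ell+\ell'}$. Since neither $V$ nor $W$ is of type $E_4$, relation \eqref{eq:r in type Z} applies to each summand and yields $\mathfrak{r}\,U_t\subset U_{t-1}$ for all $t\ge 1$. Moreover, $\text{soc}(V\otimes W)=U_0$ by hypothesis. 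Lemma \ref{lemma:soc_series}, applied with $V_{k}=U_{k-1}$, then gives
\[
\text{soc}^{t+1}(V\otimes W) \;=\; U_0\oplus U_1\oplus\cdots\oplus U_t \;=\; \bigoplus_{i+j\le t} V(a_i)\otimes V(b_j),
\]
which is the rightmost equality of (ii). In particular the socle length is $\ell+\ell'+1$, as the filtration has this many nonzero steps.

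It remains to identify the sum $\sum_{i=0}^{t}\text{soc}^{i+1}(V)\otimes\text{soc}^{t+1-i}(W)$ with the same right-hand side. Applying Lemma \ref{lemma:soc_series} separately to $V$ and $W$ (both of type $Z$, so the lemma's hypothesis holds on each factor), we have $\text{soc}^{i+1}(V)=\bigoplus_{i'\le \min(i,\ell)} V(a_{i'})$ and $\text{soc}^{t+1-i}(W)=\bigoplus_{j'\le \min(t-i,\ell')} V(b_{j'})$. Then a pair $(i',j')$ with $i'\le \ell$, $j'\le \ell'$ appears in the $i$-th summand precisely when $i'\le i\le t-j'$, so taking the union over $i\in\{0,\dots,t\}$ captures exactly those $(i',j')$ with $i'+j'\le t$, giving the desired equality. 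The main obstacle is really just the bookkeeping in (i), ensuring that (a)–(d) are exhaustive and appropriately disjoint; the rest is a routine application of Lemma \ref{lemma:soc_series} once \eqref{eq:r in type Z} is invoked.
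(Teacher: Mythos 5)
Your proposal is correct and follows exactly the paper's route: part (i) by reading off the vanishing of the $S_t$ from Theorem \ref{thm:main}, and part (ii) by applying Lemma \ref{lemma:soc_series} to the graded decomposition $U_t=\bigoplus_{i+j=t}V(a_i)\otimes V(b_j)$ using \eqref{eq:r in type Z}. You are somewhat more explicit than the paper, which leaves the case-by-case vanishing of $S_t$ and the identification $\sum_{i=0}^{t}\text{soc}^{i+1}(V)\otimes\text{soc}^{t+1-i}(W)=\bigoplus_{i+j\le t}V(a_i)\otimes V(b_j)$ to the reader, but there is no substantive difference.
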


\begin{proof}
Part (i) follows at once from Theorem \ref{thm:main}.
Part (ii) is a consequence of Lemma \ref{lemma:soc_series}
applied to the decomposition 
\[
V\otimes W=\bigoplus_{k=1}^{\ell+\ell'+1}(V\otimes W)_k
\]
with $(V\otimes W)_k=\bigoplus_{i+j=k-1} V(a_i)\otimes V(b_j)$.
The hypothesis $\mathfrak{r} V_k \subset V_{k-1}$ required by the 
lemma follows from \eqref{eq:r in type Z} 
and since we are in the cases described in (i) we 
have $\text{soc}(V\otimes W)=(V\otimes W)_1$
as required by the lemma.
\end{proof}
 
%%%%%%%%%%%%%%%%%%%%%%%%%%%%%%%%%%%%%%%%%%%%%%%%%%%%%%%
 \section{Applications} 
%%%%%%%%%%%%%%%%%%%%%%%%%%%%%%%%%%%%%%%%%%%%%%%%%%%%%%%
 Recall from \S\ref{wdos} that 
a uniserial $\g_m$-module is of type $Z$ if it is isomorphic to
$Z(a,\ell)$ or $Z(a,\ell)^*$ for some non-negative integers 
$a$ and $\ell$.

\subsection{Invariants and intertwining operators}\label{sec:intertwining}

The main goal of this subsection is to obtain 
the intertwining operators between
two uniserial $\g_m$-modules $V$ and $W$ of type $Z$.

Since $\text{Hom}(V,W)\simeq V^*\otimes W$ as $\g_m$-modules, we have
\[
\text{Hom}_{\g_m}(V,W)\simeq (V^*\otimes W)^{\g_m}.
\]
In turn, since $\text{soc}(V^*\otimes W)=(V^*\otimes W)^{\mathfrak{r}}$
 (see Lemma \ref{lemma:soc} and \eqref{eq:soc=r-invariants}),
it follows that 
$(V^*\otimes W)^{\g_m}$
is the subspace of $\sl(2)$-invariant vectors in 
$\text{soc}(V^*\otimes W)$.
Since Theorem \ref{thm:main} shows that $\sl(2)$-decomposition 
is multiplicity free, it follows immediately that 
$\dim\text{Hom}_{\g_m}(V,W)$ (or $\dim(V^*\otimes W)^{\g_m}$) is either 0 or 1. 

The following two corollaries describe exactly in which cases these dimensions are 1. 

 \begin{corollary}\label{thm:invariants}
Let 
$V=V(a_0)\oplus \hdots \oplus V(a_\ell)$ and 
$W=V(b_0)\oplus \hdots \oplus V(b_{\ell'})$
be socle decomposition of two uniserial $\g_m$-modules of type $Z$.
Then
\[
(V\otimes W)^{\g_m}\ne 0
\]
if and only if 
$b_0\in \{a_0, \hdots, a_\ell\}$ and 
$a_0\in \{b_0, \hdots, b_{\ell'}\}$ and in this case $(V\otimes W)^{\g_m}$
is 1-dimensional. 
\end{corollary}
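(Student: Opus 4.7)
The plan is to reduce the problem to reading off when the trivial representation $V(0)$ appears as an $\sl(2)$-summand of $\text{soc}(V\otimes W)$, using the decomposition supplied by Theorem~\ref{thm:main}. Since $\mathfrak{r}=[\mathfrak{s},\mathfrak{r}]$, Lemma~\ref{lemma:soc} gives $(V\otimes W)^{\mathfrak{r}}=\text{soc}(V\otimes W)$, and hence
\[
(V\otimes W)^{\g_m}=\big(\text{soc}(V\otimes W)\big)^{\sl(2)}.
\]
Because Theorem~\ref{thm:main} establishes that $\text{soc}(V\otimes W)$ is multiplicity free as an $\sl(2)$-module, this invariant space has dimension $0$ or $1$, and the dimension is $1$ precisely when $V(0)$ is one of the summands. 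So the whole argument collapses to a bookkeeping check, case by case, that the ``$V(0)$-appears'' condition matches the set-theoretic condition in the statement.

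Next I would go through the three cases of Theorem~\ref{thm:main}, noting that the remaining possibility $V=Z(a_\ell,\ell)^*$, $W=Z(b_0,\ell')$ reduces to case~(ii) via $V\otimes W\simeq W\otimes V$, together with the fact that the claimed condition is symmetric in $(V,W)$. In case~(i) the $\sl(2)$-weights in the socle are $a_0+b_0-2k$ for $0\le k\le\min\{a_0,b_0\}$ and $a_0+b_0+tm$ for $1\le t\le\min\{\ell,\ell'\}$; since $m\ge 1$ only the first family can produce $V(0)$, forcing $a_0=b_0$. In case~(iii) the weights are $|a_0-b_0|+2k$ for $0\le k\le\min\{a_0,b_0\}$, so again $V(0)$ appears iff $a_0=b_0$. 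The more delicate case is~(ii): when $a_0>b_0$ every weight is strictly positive, whereas for $a_0\le b_0$ the socle contains $V(b_0-a_0+2k)$ for $0\le k\le a_0$ (producing $V(0)$ iff $a_0=b_0$) together with $V(b_0-a_0-tm)$ for $1\le t\le T=\min\{\ell,\ell',\lfloor(b_0-a_0)/m\rfloor\}$. The latter family contributes $V(0)$ iff $b_0-a_0=tm$ for some $1\le t\le T$, and since the floor term in $T$ is automatically $(b_0-a_0)/m$ once $m\mid(b_0-a_0)$, the net criterion in case~(ii) is that $b_0-a_0$ be a non-negative multiple of $m$ lying in $[0,\min\{\ell,\ell'\}m]$.

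To finish, I would match each of these criteria with the claimed set-theoretic condition. Recalling $a_i=a_0\pm im$ and $b_j=b_0\pm jm$ (signs depending on whether each of $V,W$ is a $Z$ or a $Z^*$) and $m\ge 1$, in cases~(i) and~(iii) the hypotheses ``$b_0\in\{a_0,\dots,a_\ell\}$'' and ``$a_0\in\{b_0,\dots,b_{\ell'}\}$'' together collapse to $a_0=b_0$, exactly matching the findings. In case~(ii), ``$b_0\in\{a_0,a_0+m,\dots,a_0+\ell m\}$'' says $b_0-a_0\in m\cdot\{0,1,\dots,\ell\}$, while ``$a_0\in\{b_0,b_0-m,\dots,b_0-\ell'm\}$'' says $b_0-a_0\in m\cdot\{0,1,\dots,\ell'\}$; intersecting gives $b_0-a_0\in m\cdot\{0,1,\dots,\min\{\ell,\ell'\}\}$, which is precisely the condition under which $V(0)$ appears. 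The statement follows. No genuine obstacle arises; the only mild care needed is in case~(ii), where $V(0)$ can arise either from the ``basic'' summand $\text{soc}(V)\otimes\text{soc}(W)$ (when $a_0=b_0$) or from the additional summands $V(b_0-a_0-tm)$ with $t\ge 1$, and one must check both sources against the range $0\le (b_0-a_0)/m\le\min\{\ell,\ell'\}$.
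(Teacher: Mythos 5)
Your argument is correct and follows the same route as the paper: reduce $(V\otimes W)^{\g_m}$ to the $\sl(2)$-invariants in $\text{soc}(V\otimes W)$ via Lemma~\ref{lemma:soc}, invoke multiplicity-freeness from Theorem~\ref{thm:main} to get dimension $0$ or $1$, then match the occurrence of $V(0)$ case by case against the set-theoretic condition (handling the fourth case $Z^*\otimes Z$ by the symmetry $V\otimes W\simeq W\otimes V$). Your bookkeeping in each case, including the careful treatment of the $V(b_0-a_0-tm)$ summands in case~(ii) and the observation that the floor in $T$ becomes inactive once $m\mid(b_0-a_0)$, agrees with the paper's proof.
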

\begin{proof}
We have to consider the possibilities $V$ and $W$ isomorphic to 
$Z(c,k)$ or $Z(c,k)^*$. 

If  either 
$V=Z(a_0,\ell)$ and 
$W=Z(b_0,\ell')$, or 
$V=Z(a_{\ell},\ell)^*$ and 
$W=Z(b_{\ell},\ell')^*$,   
then Theorem \ref{thm:main} implies 
that the trivial $\sl(2)$-module appears in
$\text{soc}(V \otimes W)$ if and only if 
$b_0=a_0$. 
In the first and second cases the sequences 
$\{a_i\}$ and $\{b_j\}$ look like:
\begin{equation*}
\begin{matrix}
a_0 & a_0+m & a_0 +2m & \hdots  \\[2mm]
 \verteq  &  \verteq  &  \verteq & \\
b_0 &  b_0+m & b_0 +2m & \hdots .
\end{matrix}
\qquad\text{ or }\qquad
\begin{matrix}
 \hdots & a_0-2m & a_0-m & a_0 \\[2mm]
 &\verteq  &  \verteq  &  \verteq  \\
 \hdots & b_0-2m & b_0-m & b_0
\end{matrix}
\end{equation*}
respectively. 
In these cases, this precisely coincides with the condition 
$b_0\in \{a_i:i=0,\dots\ell\}$ and 
$a_0\in \{b_j:j=0,\dots\ell'\}$.

If   
$V=Z(a_0,\ell)$ and 
$W=Z(b_{\ell'},\ell')^*$, it follows from 
Theorem \ref{thm:main} 
that $\text{soc}(V \otimes W)$  contains the trivial representation of $\sl(2)$ if and only if 
$b_0=a_0+tm$ with $0\le t\le\min\{\ell,\ell'\}$.
Thus, the sequences 
$\{a_i\}$ and $\{b_j\}$ look like:
\begin{equation*}
\begin{matrix}
 &&a_0 &  \hdots & a_0 +tm & \hdots &a_0+\ell m \\[2mm]
 &&\verteq  &&  \verteq  &         &     &        & \\
 b_0-{\ell'}m &\hdots &b_0-tm & \hdots&  b_0.
\end{matrix}
\end{equation*}
Again, in this case, this precisely coincides with the condition 
$b_0\in \{a_i:i=0,\dots\ell\}$ and 
$a_0\in \{b_j:j=0,\dots\ell'\}$.

The   case 
$V=Z(a_{\ell},\ell)^*$ and 
$W=Z(b_{0},\ell')$ is symmetric to the previous one.
\end{proof}

 \begin{corollary}\label{thm:intertwining}
Let 
$V=V(a_0)\oplus \hdots \oplus V(a_\ell)$ and 
$W=V(b_0)\oplus \hdots \oplus V(b_{\ell'})$
be socle decomposition of two uniserial $\g_m$-modules of type $Z$.
Then $\dim\text{Hom}_{\g_m}(V,W)$ is either 0 or 1 and  
\[
\dim\text{Hom}_{\g_m}(V,W)=1
\]
if and only if 
$b_0\in \{a_0, \hdots, a_\ell\}$ and 
$a_\ell\in \{b_0, \hdots, b_{\ell'}\}$, that is, these sets look like
\begin{equation*}
\begin{matrix}
a_0 & \hdots & a_i & \hdots  & a_\ell &        & \\[2mm]
           &&  \verteq  &         & \verteq     &        & \\
           &&  b_0 & \hdots & b_j     & \hdots & b_{\ell'}.
\end{matrix}
\end{equation*}
\end{corollary}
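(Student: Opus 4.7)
The plan is to reduce Corollary \ref{thm:intertwining} to Corollary \ref{thm:invariants} applied to the module $V^{*}\otimes W$. The starting point, already set up in the paragraph preceding the statement, is the standard isomorphism
\[
\text{Hom}_{\g_m}(V,W)\;\simeq\;(V^{*}\otimes W)^{\g_m},
\]
together with the observation that $(V^{*}\otimes W)^{\g_m}$ is the space of $\sl(2)$-invariant vectors inside $\text{soc}(V^{*}\otimes W)$. Since $V^{*}$ is of type $Z$ whenever $V$ is (by the very definition, $Z(a,\ell)^{*}$ is of type $Z$, and duals of duals are isomorphic to the original module), Theorem \ref{thm:main} applies to $V^{*}\otimes W$ and gives that $\text{soc}(V^{*}\otimes W)$ is multiplicity free as an $\sl(2)$-module. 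This immediately yields the dichotomy $\dim\text{Hom}_{\g_m}(V,W)\in\{0,1\}$.

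To identify when this dimension equals $1$, I would first work out the socle decomposition of $V^{*}$. For a uniserial module of composition length $n=\ell+1$ the socle and radical series are related by $\text{rad}^{i}(V)=\text{soc}^{n-i}(V)$, and dualization exchanges them: $\text{soc}^{i}(V^{*})=(V/\text{rad}^{i}(V))^{*}$. Combining these gives
\[
\text{soc}^{i+1}(V^{*})/\text{soc}^{i}(V^{*})\;\cong\;\bigl(\text{rad}^{i}(V)/\text{rad}^{i+1}(V)\bigr)^{*}\;\cong\;V(a_{\ell-i}),
\]
using the self-duality of irreducible $\sl(2)$-modules. Hence the socle decomposition of $V^{*}$ is the reverse of that of $V$, namely
\[
V^{*}=V(a_{\ell})\oplus V(a_{\ell-1})\oplus \cdots \oplus V(a_{0}),
\]
with $\text{soc}(V^{*})=V(a_{\ell})$ and the full list of $\sl(2)$-isotypic components being the (unordered) set $\{a_{0},\dots,a_{\ell}\}$.

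Now Corollary \ref{thm:invariants}, applied to the pair $(V^{*},W)$ of type $Z$ uniserials, says that $(V^{*}\otimes W)^{\g_m}\neq 0$ if and only if the socle of $W$ appears among the $\sl(2)$-summands of $V^{*}$, and the socle of $V^{*}$ appears among the $\sl(2)$-summands of $W$. By the previous paragraph these two conditions translate exactly to
\[
b_{0}\in\{a_{0},\dots,a_{\ell}\}\qquad\text{and}\qquad a_{\ell}\in\{b_{0},\dots,b_{\ell'}\},
\]
which is the stated criterion.

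The only genuinely substantive step is verifying that the socle decomposition of $V^{*}$ is the reverse of the socle decomposition of $V$; everything else is a direct appeal to earlier results. This verification is standard for uniserial modules, and in the present setting it can also be seen concretely from the explicit presentation $Z(a,\ell)^{*}$, so I do not expect any real obstacle — the proof is essentially one line once Corollary \ref{thm:invariants} is in hand.
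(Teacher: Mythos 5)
Your proof is correct and follows essentially the same route as the paper: apply Corollary~\ref{thm:invariants} to the pair $(V^{*},W)$ via the identification $\text{Hom}_{\g_m}(V,W)\simeq(V^{*}\otimes W)^{\g_m}$, using that the socle decomposition of $V^{*}$ reverses that of $V$. The only difference is that you spell out the reversal via the radical--socle duality, whereas the paper simply states it; this is a cosmetic elaboration, not a different argument.
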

\begin{proof}
This is basically a direct consequence of Corollary \ref{thm:invariants}.
Since 
\[
\text{Hom}_{\g_m}(V,W)\simeq (V^*\otimes W)^{\g_m}
\]
we need to apply Corollary  \ref{thm:invariants} to the 
$\g_m$-modules $V^*$ and $W$ whose socle decomposition are
$V^*=V(a_\ell)\oplus \hdots \oplus V(a_0)$ and 
$W=V(b_0)\oplus \hdots \oplus V(b_{\ell'})$ respectively.
Therefore 
$(V^*\otimes W)^{\g_m}\ne0$ if and only if
$b_0\in \{a_0, \hdots, a_\ell\}$ and 
$a_\ell\in \{b_0, \hdots, b_{\ell'}\}$.
\end{proof}

\subsection{Isomorphisms between tensor products}\label{sec:isomorphisms}
In this section we use Theorem \ref{thm:main} to prove, for $m\ne 2$,  
that if $U$ is the tensor product of 
two uniserial $\g_m$-modules of type $Z$, then the factors are determined by $U$.

If $U$ is the tensor product of 
two uniserial $\g_m$-modules of type $Z$, then so is 
$U^*$. Recall also that, in this case,
Theorem \ref{thm:main} implies that 
the list of 
highest weights appearing in 
the $\sl(2)$-decomposition of $\text{soc}(U)$
consists of the union of two disjoint 
sets $A_2(U)$ and $A_m(U)$ whose elements are in 
arithmetic progressions with common differences 
2 and $m$ respectively (see Remark \ref{rmk:A2Am}).  
Thus we have
\[
\text{soc}(U)\simeq
\bigoplus_{k\in A_2(U)} V(k)
\;\oplus\;
\bigoplus_{k\in A_m(U)} V(k).
\]
We know that always $A_2(U)\ne\emptyset$ but $A_m(U)$ might be empty
and, as pointed out in  Remark \ref{rmk:A2Am}, both sets can be obtained from $U$ when $m\ne 2$. If $m=2$, we do not know whether it is possible 
to read off $A_2(U)$ and $A_m(U)$ from $U$.

 \begin{theorem}\label{thm:isomorphism}
 Let $m\ne 2$ and let $U$ be the tensor product of 
two uniserial $\g_m$-modules of type $Z$.
Then the factors are determined by $U$.
More precisely, let $\Lambda$ be 
the greatest (highest) weight in $U$ and let
$A_2=A_2(U)$, $A_m=A_m(U)$, 
$A_2^*=A_2(U^*)$ and $A_m^*=A_m(U^*)$.
Then:
\begin{enumerate}[(i)]
\item Assume that 
$\max A_2^*=\Lambda$.
 Set
\[
\ell'=\begin{cases}
\dfrac{\max A_m-\max A_2}{m}, & 
\text{if $A_m\ne\emptyset$;} \\
0, & \text{if $A_m=\emptyset$;} 
\end{cases}\qquad 
\ell=\frac{\max A_2^*-\max A_2}{m}-\ell'.
\]
If 
$(\ell-\ell')m=\min A_2^* - \min A_2$ 
then set 
\[
a=\frac{\max A_2+\min A_2}{2},\quad 
b=\frac{\max A_2-\min A_2}{2},
\]
else set 
\[
a=\frac{\max A_2-\min A_2}{2},\quad 
b=\frac{\max A_2+\min A_2}{2}.
\]
We have $U\simeq Z(a,\ell)\otimes Z(b,\ell')$.

\medskip

\item Assume 
$\max A_2=\Lambda$.
Set
\[
\ell'=\begin{cases}
\dfrac{\max A_m^*-\max A_2^*}{m}, & 
\text{if $A_m^*\ne\emptyset$;} \\
0, & \text{if $A_m^*=\emptyset$;} 
\end{cases}\qquad 
\ell=\frac{\max A_2-\max A_2^*}{m}-\ell'.
\]
If 
$(\ell-\ell')m=\min A_2 - \min A_2^*$ 
then set 
\[
a=\frac{\max A_2^*+\min A_2^*}{2},\quad 
b=\frac{\max A_2^*-\min A_2^*}{2},
\]
else set 
\[
a=\frac{\max A_2^*-\min A_2^*}{2},\quad 
b=\frac{\max A_2^*+\min A_2^*}{2}.
\]
We have $U\simeq Z(a,\ell)^*\otimes Z(b,\ell')^*$.

\medskip

\item Assume that neither $\max A_2$ nor 
$\max A_2^*$ is $\Lambda$. 
Set
\[
\ell'=
\frac{\Lambda-\max A_2^*}{m},
\qquad 
\ell=\frac{\Lambda-\max A_2}{m}.
\]
If 
$(\ell+\ell')m=\min A_2^* - \min A_2$ 
then set 
\[
a=\frac{\max A_2+\min A_2}{2},\quad 
b=\frac{\max A_2-\min A_2}{2}-\ell'm,
\]
else set 
\[
a=\frac{\max A_2-\min A_2}{2},\quad 
b=\frac{\max A_2+\min A_2}{2}-\ell'm.
\]
We have $U\simeq Z(a,\ell)\otimes Z(b,\ell')^*$.
\end{enumerate}  
\end{theorem}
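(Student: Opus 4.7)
The proof proceeds by showing that a small list of numerical invariants of $U$ is enough to recover the factorization, and then inverting the formulas of Theorem~\ref{thm:main}. The invariants are the top weight $\Lambda$ of $U$ together with the four arithmetic progressions $A_2(U), A_m(U), A_2(U^*), A_m(U^*)$. First I would observe that all four sets are computable from $U$: the $\sl(2)$-decomposition of $\text{soc}(U)$ is an invariant of $U$, and by Remark~\ref{rmk:A2Am} its highest-weight list splits uniquely into the disjoint union of two arithmetic progressions with common differences $2$ and $m$. The uniqueness of this splitting is precisely where the hypothesis $m\ne 2$ enters; for $m=2$ the two progressions are indistinguishable as abstract arithmetic progressions and the argument breaks down at this very first step.

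Next I would read off the case, i.e.\ which of (i)/(ii)/(iii) applies, from the position of $\Lambda$ relative to $\max A_2$ and $\max A_2^*$. If $U\simeq Z(a,\ell)\otimes Z(b,\ell')$ then $U^*\simeq Z(a,\ell)^*\otimes Z(b,\ell')^*$ is of the type handled by Theorem~\ref{thm:main}(iii), so $\text{soc}(U^*)=\text{soc}(V^*)\otimes\text{soc}(W^*)$ and hence
$$
\max A_2^* \;=\; (a+\ell m)+(b+\ell' m)\;=\;\Lambda,
$$
which is precisely the hypothesis of case (i). Symmetrically, if both factors are of type $Z^*$ then Theorem~\ref{thm:main}(iii) applied to $U$ itself gives $\max A_2=\Lambda$, i.e.\ case (ii). In the remaining mixed case, $U$ falls under Theorem~\ref{thm:main}(ii), and a direct calculation yields $\Lambda-\max A_2=\ell m>0$ and $\Lambda-\max A_2^*=\ell' m>0$, matching case (iii). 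Hence the trichotomy in the theorem's statement is exhaustive and mutually exclusive.

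In each case I would then invert the formulas of Theorem~\ref{thm:main} term by term. For instance in case (i): from $\max A_2=a+b$ and $\min A_2=|a-b|$ one recovers the unordered pair $\{a,b\}$; from $\max A_m-\max A_2=\min\{\ell,\ell'\}m$ and $\max A_2^*-\max A_2=(\ell+\ell')m$ one recovers the unordered pair $\{\ell,\ell'\}$. The only remaining ambiguity is the pairing between $\{a,b\}$ and $\{\ell,\ell'\}$, which I would resolve using
$$
\min A_2^* \;=\; |(a+\ell m)-(b+\ell' m)|.
$$
The sign of $(a-b)+(\ell-\ell')m$ determines whether $\min A_2^*-\min A_2$ equals $(\ell-\ell')m$ or its negative, and this is exactly the content of the conditional clause in the statement of case (i). Cases (ii) and (iii) are handled analogously, with the roles of $U$ and $U^*$ exchanged in case (ii) and with $(\ell+\ell')m$ in place of $(\ell-\ell')m$ in case (iii) because there the top weights of the two factors belong to opposite ends of their respective socle series.

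The principal obstacle is the careful verification that, in each case, the ``if $\ldots$ then $\ldots$ else $\ldots$'' clause indeed picks out the correct ordered pairing $(a,\ell),(b,\ell')$. Once one checks that swapping $a\leftrightarrow b$ exchanges the two values of $\min A_2^*$ predicted by the two candidate pairings (which is a direct arithmetic consequence of the formulas above whenever $a\ne b$ and $\ell\ne \ell'$), the inversion is forced. Degenerate situations such as $\ell=0$, $\ell'=0$, $a=b$, or $\ell=\ell'$ make the two candidate pairings coincide, so no choice is needed; these must nevertheless be checked, and they account for essentially all the minor casework in the proof. Apart from this bookkeeping, no new ideas beyond Theorem~\ref{thm:main} and Remark~\ref{rmk:A2Am} are required.
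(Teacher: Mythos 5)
Your overall approach is the same as the paper's: invert the formulas of Theorem~\ref{thm:main} via Remark~\ref{rmk:A2Am}, reading off the case from the position of $\Lambda$ relative to $\max A_2$ and $\max A_2^*$, then recover the unordered pairs $\{a_0,b_0\}$ and $\{\ell_0,\ell'_0\}$ from the endpoints of $A_2$, $A_2^*$, $A_m$, $A_m^*$, and finally resolve the pairing ambiguity with the conditional on $\min A_2^*-\min A_2$. Two points need correction.

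First, a minor one: the trichotomy (i)/(ii)/(iii) is \emph{not} mutually exclusive. When $\ell_0=\ell'_0=0$ both $\max A_2=\Lambda$ and $\max A_2^*=\Lambda$ hold, so (i) and (ii) both apply; when exactly one of $\ell_0,\ell'_0$ is $0$, a mixed tensor product $Z(a_0,\ell_0)\otimes Z(b_0,0)^*$ also falls into (i). This does not invalidate the theorem — the procedures coincide on overlaps — but it means the degenerate cases are not merely ``checking that no choice is needed''; one must check that whichever procedure is applied gives the same (correct) answer. The paper handles $\ell_0=\ell'_0=0$ and $\ell'_0=0<\ell_0$ as separate cases before turning to $\ell_0,\ell'_0>0$ for exactly this reason.

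Second, the genuine gap: your claim that ``the sign of $(a-b)+(\ell-\ell')m$ determines whether $\min A_2^*-\min A_2$ equals $(\ell-\ell')m$ or its negative'' is false. In general $\min A_2^*-\min A_2$ is \emph{not} one of the two values $\pm(\ell-\ell')m$. For example with $a_0=1$, $b_0=3$, $\ell_0=2$, $\ell'_0=1$, $m=3$ one has $\min A_2=2$ and $\min A_2^*=|{-2}+3|=1$, so $\min A_2^*-\min A_2=-1$, which is neither $3$ nor $-3$ even though $(a_0-b_0)+(\ell_0-\ell'_0)m=1>0$. The correct criterion — and the one the paper isolates as the elementary lemma \eqref{eq:xyz}: for $x,y\ge 0$ and $z>0$, $|x-y+z|-|x-y|=z$ if and only if $x\ge y$ — is that $\min A_2^*-\min A_2=(\ell-\ell')m$ holds precisely when $a_0-b_0$ and $\ell_0-\ell'_0$ have the same sign, i.e.\ when the larger of $a_0,b_0$ is paired with the larger of $\ell_0,\ell'_0$. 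This is what makes the ``if/else'' clause select the correct ordered pairing. Your observation that the two candidate pairings predict distinct values of $\min A_2^*$ whenever $a\ne b$ and $\ell\ne\ell'$ is true and is the right idea, but as written the argument would lead you to try to show an equality that fails; you need \eqref{eq:xyz} (or an equivalent) to make the conditional work.
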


\begin{proof}
We know that $U$ is one of the following possibilities:
\[
Z(a_0,\ell_0)\otimes Z(b_0,\ell'_0),\quad
Z(a_0,\ell_0)^*\otimes Z(b_0,\ell'_0)^*,\quad
Z(a_0,\ell_0)\otimes Z(b_0,\ell'_0)^*.
\]
In any case, $\Lambda=a_0+b_0+(\ell_0+\ell'_0)m$.

In order to apply Theorem \ref{thm:main}, it is convenient 
to recall that the socle decompositions of the modules 
$Z(c,t)$ and $Z(c,t)^*$ are
\begin{align*}
Z(c,t)&=V(c)\oplus V(c+m)\oplus
\dots\oplus V(c+t m), \\
Z(c,t)^*&=V(c+t m)\oplus V(c+(t-1)m)\oplus
\dots\oplus V(c).
\end{align*}
Thus (see Remark \ref{rmk:A2Am})

\noindent
If $U=Z(a_0,\ell_0)\otimes Z(b_0,\ell'_0)$ then 
\begin{align*}
A_2&=\{|a_0-b_0|,|a_0-b_0|+2,\dots,a_0+b_0\}, \\[2mm]
A_2^*&=\{|a_0\!+\!\ell_0 m-b_0\!-\!\ell'_0 m|,
|a_0\!+\!\ell_0 m-b_0\!-\!\ell'_0 m|+2,\dots,a_0\!+\!\ell_0 m+b_0\!+\!\ell'_0 m\}, \\[2mm]
A_m&=\{a_0+b_0+m,\dots,a_0+b_0+\min\{\ell_0,\ell'_0\}m\}
\text{ if $\ell_0,\ell'_0>0$, else $A_m=\emptyset$}, \\[2mm]
A_m^*&=\emptyset.
\end{align*}

\noindent
If $U=Z(a_0,\ell_0)^*\otimes Z(b_0,\ell'_0)^*$ then 
\begin{align*}
A_2^*&=\{|a_0-b_0|,|a_0-b_0|+2,\dots,a_0+b_0\}, \\[2mm]
A_2&=\{|a_0\!+\!\ell_0 m-b_0\!-\!\ell'_0 m|,
|a_0\!+\!\ell_0 m-b_0\!-\!\ell'_0 m|+2,\dots,a_0\!+\!\ell_0 m+b_0\!+\!\ell'_0 m\}, \\[2mm]
A_m^*&=\{a_0+b_0+m,\dots,a_0+b_0+\min\{\ell_0,\ell'_0\}m\}
\text{ if $\ell_0,\ell'_0>0$, else $A_m=\emptyset$}, \\[2mm]
A_m&=\emptyset.
\end{align*}

\noindent
If $U=Z(a_0,\ell_0)\otimes Z(b_0,\ell'_0)^*$
and $\ell_0,\ell'_0>0$
then 
\begin{align*}
A_2&=\{|a_0-b_0-\ell'_0 m|,|a_0-b_0-\ell'_0 m|+2,\dots,a_0+b_0+\ell'_0 m\}, \\[2mm]
A_2^*&=\{|a_0+\ell_0 m-b_0|,
|a_0+\ell_0 m-b_0|+2,\dots,a_0+\ell_0 m+b_0\}, \\[2mm]
A_m&=\left\{b_0+\ell'_0 m-a_0-tm:t=1,\dots,T
\right\}
 \text{ if $b_0+\ell'_0 m-a_0\ge m$, else $A_m=\emptyset$}, \\[2mm]
A_m^*&=\left\{a_0+\ell_0 m-b_0-tm:t=1,\dots,T^* \right\}
 \text{ if $a_0+\ell_0 m-b_0\ge m$, else $A_m=\emptyset$},
\end{align*}
with 
\[
T=\min\left\{\ell_0,\ell'_0,
\left\lfloor\tfrac{b_0+\ell'_0 m-a_0}{m}\right\rfloor\right\},\qquad
T^*=\min\left\{\ell_0,\ell'_0,
\left\lfloor\tfrac{a_0+\ell_0 m-b_0}{m}\right\rfloor\right\}.
\]
In what follows we need the following fact: given $x,y\ge0$
and $z>0$ then 
\begin{equation}\label{eq:xyz}
|x-y+z|-|x-y|=z\quad\text{if and only if}\quad x\ge y.
\end{equation}

Now we begin the proof.
Suppose first that $\ell_0=\ell'_0=0$.
Then $U\simeq Z(a_0,0)\otimes Z(b_0,0)$ and 
$U$ falls into cases (i) and (ii). 
Either (i) or (ii) 
yield $\ell=\ell'=0$ and $a=\max\{a_0,b_0\}$, 
$b=\min\{a_0,b_0\}$, which is correct.

Suppose now that $\ell'_0=0$ and $\ell_0>0$.
Then either 
$U\simeq Z(a_0,\ell)\otimes Z(b_0,0)$ or 
$U\simeq Z(a_0,\ell)^*\otimes Z(b_0,0)$. 

If 
$U\simeq Z(a_0,\ell)\otimes Z(b_0,0)$ 
then $U$ falls only in case (i) (since 
$\max A_2<\Lambda$) and this 
yield $\ell'=0$, $\ell=\ell_0$.
Since 
\[
\min A_2=|a_0-b_0|\quad\text{ and }\quad 
\min A_2^*=|a_0+\ell m-b_0|
\]
it follows that $\min A_2^*-\min A_2=\ell m$ if and only if 
$a_0\ge b_0$. 
In any case, we obtain $a=a_0$ and $b=b_0$.

Similarly,  
$U\simeq Z(a_0,\ell)^*\otimes Z(b_0,0)$ 
then $U$ falls only in case (ii) (since 
$\max A_2^*<\Lambda$) and this 
yield $\ell'=0$, $\ell=\ell_0$.
Now 
\[
\min A_2=|a_0+\ell m-b_0|\quad\text{ and }\quad 
\min A_2^*=|a_0-b_0|
\]
and hence $\min A_2-\min A_2^*=\ell m$ if and only if 
$a_0\ge b_0$. 
In any case, we obtain $a=a_0$ and $b=b_0$.

We finally suppose that $\ell_0,\ell'_0>0$.
Then cases (i), (ii), (iii) correspond
exactly (without superposition) to the cases when $U$ is isomorphic to 
$Z(a_0,\ell_0)\otimes Z(b_0,\ell'_0)$, or 
$Z(a_0,\ell_0)^*\otimes Z(b_0,\ell'_0)^*$, or
$Z(a_0,\ell_0)\otimes Z(b_0,\ell'_0)^*$ respectively.

In cases (i) and (ii) we obtain 
$\ell=\max\{\ell_0,\ell'_0\}$, 
$\ell'=\min\{\ell_0,\ell'_0\}$.
We need to show that $a_0$ and $b_0$ are obtained correctly. 

In case (i) we have to distinguish four cases:
\[
\begin{array}{l}
\ell_0\ge\ell'_0  \\
 a_0\ge b_0
\end{array},\qquad
\begin{array}{l}
\ell_0\ge\ell'_0  \\
 a_0\le b_0
\end{array},\qquad\begin{array}{l}
\ell_0\le\ell'_0  \\
 a_0\ge b_0
\end{array},\qquad\begin{array}{l}
\ell_0\le\ell'_0  \\
 a_0\le b_0
\end{array}.
\]
Let us consider, for instance, the third case. 
We obtain $\ell=\ell'_0$ and $\ell'=\ell_0$. 
If $a_0=b_0$ it is clear that the result will be correct.
Otherwise, $b_0< a_0$ and since 
\[
\min A_2=|b_0-a_0|\quad\text{ and }\quad 
\min A_2^*=|b_0-a_0+(\ell'_0-\ell_0) m|
\]
it follows from \eqref{eq:xyz} that 
$(\ell-\ell')m\ne \min A_2^* - \min A_2$ 
and hence $a=b_0$ and $b=b_0$, which is correct. 
All the other three cases work similarly.

The case (ii) is analogous to the case (i).

In case (iii) we obtain $\ell=\ell_0$ and $\ell'=\ell'_0$.
Now, since 
\[
\min A_2=|a_0-b_0-\ell' m|\quad\text{ and }\quad 
\min A_2^*=|a_0-b_0+\ell m|=|a_0-b_0-\ell' m+(\ell+\ell') m|
\]
it follows from \eqref{eq:xyz} that 
$(\ell+\ell')m=\min A_2^* - \min A_2$ 
if and only if $a_0\ge b_0+\ell' m$.
In either case we obtain $a=a_0$ and $b=b_0$, which is correct. 

This completes the proof.
\end{proof}

\bibliographystyle{plain}
\bibliography{bibliografia}

\end{document}